\documentclass[12pt]{scrartcl}

\usepackage{setspace}

\usepackage{color}
\usepackage{array}
\usepackage{supertabular}
\usepackage{hhline}
\usepackage{multirow}
\usepackage{multicol}
\usepackage{textgreek}
\usepackage{upgreek}

\usepackage{wrapfig}
\usepackage[title]{appendix}
\usepackage{mathbbol}

\usepackage[listings,theorems]{tcolorbox}

\usepackage{pifont}

\usepackage[thicklines]{cancel}

\usepackage[LGR,T1]{fontenc}

\usepackage{tikz}

\usetikzlibrary{hobby}

\usepackage[customcolors]{hf-tikz}

\usepackage{circuitikz}

\usetikzlibrary{decorations.pathreplacing,arrows.meta}

\usetikzlibrary{calc,patterns,decorations.markings}
\usetikzlibrary{fadings}
\usetikzlibrary{angles,quotes}
\usepackage{tikz-3dplot}

\hfsetfillcolor{yellow!10}
\hfsetbordercolor{darkblue}

\def\uphrulefill{%
  \textcolor{darkblue!70}{\leavevmode\leaders\hrule height 1.5pt\hfill\kern0pt \rule[-0.56ex]{1.5pt}{5pt}}\\}
  
  \def\downhrulefill{%
  \textcolor{darkblue!70}{\leavevmode\leaders\hrule height 1.5pt\hfill\kern0pt \rule[0.0ex]{1.5pt}{5pt}}\\}

\definecolor{darkgreen}{rgb}{0.0, 0.2, 0.13}
\definecolor{darkred}{rgb}{0.55, 0.0, 0.0}
\definecolor{darkgreen}{rgb}{0.0, 0.2, 0.13}
\definecolor{darkmagenta}{rgb}{0.55, 0.0, 0.55}

\usepackage{pgfplots}
\usepackage{amsthm}
\pgfplotsset{compat=1.10}
\usepgfplotslibrary{fillbetween}
\usetikzlibrary{patterns}

 \usetikzlibrary{patterns}

\usepackage[latin1]{inputenc}

\newtheorem{theorem}{Theorem}[section]

\newtheorem{coro}{Corollary}[section]
\newtheorem{definition}{Definition}[section]
\newtheorem{lem}{Lemma}[section]

\newtheorem{remark}{Remark}[section]

\usepackage{graphicx}
\usepackage{fancybox}

\usepackage[latin1]{inputenc}

\usepackage{tcolorbox}

\def\uphrulefill{%
  \textcolor{darkblue!70}{\leavevmode\leaders\hrule height 1.5pt\hfill\kern0pt \rule[-0.56ex]{1.5pt}{5pt}}\\}
  
  \def\downhrulefill{%
  \textcolor{darkblue!70}{\leavevmode\leaders\hrule height 1.5pt\hfill\kern0pt \rule[0.0ex]{1.5pt}{5pt}}\\}

\usepackage{graphicx,fancybox,bbding,manfnt,wrapfig,multicol,mathrsfs}

\usepackage{amsmath,amssymb,color,makeidx,rotating,pifont}

\usepackage{mathtools}

\mathtoolsset{showonlyrefs=true}  

\begin{document}

\LARGE\sffamily 
\begin{center}\bfseries
Beta-weighted non-local differential operators and related stochastic processes
\end{center}

\normalsize

\begin{center}
{Luisa Beghin}$^{\textrm{a}}$\footnote{Corresponding author.}
,
{Nikolai Leonenko}$^{\textrm{b}}$, {Thomas Simon}$^{\textrm{c}}$ and
{Jayme Vaz}$^{\textrm{d}}$ 

\footnotesize{
		$$\begin{tabular}{llll}
$^{a}$Department of Statistical Sciences, Sapienza University, p.le Aldo Moro 5, Rome, 00185, Italy. \\
$^{b}$Cardiff School of Mathematics, Cardiff University, Senghennydd Road,
Cardiff, CF24 4AG, UK.\\
$^{c}$Laboratoire Paul Painlev\'e, Universit\'e de Lille, 42 rue Paul Duez, 59000 Lille, France.\\
$^{d}$Departamento de Matem\'atica Aplicada, 
Universidade Estadual de Campinas, 13087-859, Campinas, SP, Brazil. 
\end{tabular}$$}
\end{center}

\normalsize\rmfamily\sffamily

\bigskip

\small\rmfamily

\begin{center}
\begin{minipage}[c]{14cm}
\centerline{\bfseries\sffamily Abstract}
\smallskip
\begin{abstract}
In this work we introduce a class of non-local differential operators defined through a beta-weighted averaging of the ordinary derivative. We investigate their analytical properties and establish connections with the Caputo and Erd\'elyi-Kober operators. Differential equations involving the beta-weighted derivative are studied by Mellin transform methods, leading to solutions represented in terms of Barnes $G$-functions and a new class of $G$-hypergeometric functions. We also analyze asymptotic properties, Laplace transforms, and the second-order equation involving the sequential beta-weighted derivative.  Finally, we present stochastic applications of these results, showing that continuous-time random walks, with waiting times characterized by the beta-weighted derivative, converge to Brownian motions time-changed by a scaled inverse stable subordinator. We compare this anomalous-diffusion model with a time-changed Brownian motion whose one-dimensional distribution solve a heat-type equation with beta-weighted derivative.  
\end{abstract}
\end{minipage}
\end{center}

\normalsize\rmfamily

\medskip

\DeclareGraphicsExtensions{.gif,.pdf}

\normalsize


\section{Introduction}

\noindent
Fractional derivatives, in particular those in the sense of Caputo and 
Riemann--Liouville, play a central role in fractional calculus and its 
applications. The Caputo derivative is particularly very useful in 
modeling because of its connection with 
initial value problems \cite{Kilbas,Podlubny,Diethelm}.  
In contrast with classical derivatives, which are local in time, both the 
Caputo and Riemann-Liouville derivatives are defined through convolution-type 
integrals involving the past history of the function. This makes them 
natural candidates for describing phenomena in which the present state depends 
not only on the current configuration but also on the cumulative effect of 
previous states. Typical examples arise in viscoelasticity, where stress-strain relations 
exhibit memory effects that cannot be captured by integer-order models, and in 
anomalous diffusion processes, where the mean square displacement scales as a 
non-linear power of time. 

From a probabilistic viewpoint, fractional derivatives are closely related to 
continuous-time random walks with heavy-tailed waiting time distributions and 
to subordinated stochastic 
processes \cite{MontrollWeiss1965,MetzlerKlafter2000,MeerschaertSikorskii2012}. 
In particular, they naturally describe 
dynamics governed by inverse stable subordinators, leading to time-fractional 
evolution equations. This connection explains the appearance of fractional 
operators in models of transport in disordered media, trapping phenomena, and 
systems with long-time correlations.

Despite their success, fractional derivatives raise certain issues when examined 
from the viewpoint of dimensional analysis \cite{VazCapelas}. 
Let $f(t)$ be a quantity with physical dimension $[F]$, where $t$ has dimension 
$[T]$, and for definiteness we shall regard $t$ as time. 
The first-order derivative $df/dt$ has physical dimension 
$\left[df/dt\right] = [T]^{-1}[F]$, 
which is consistent with the interpretation of a rate of change per unit time. 
In contrast, a fractional derivative of order $\alpha \in (0,1)$, whether in 
the Caputo or in the Riemann--Liouville sense, formally has dimension
$\left[\mathcal{D}_t^\alpha f\right] = [T]^{-\alpha}[F]$. 
This leads to a first difficulty, i.e., the loss of direct dimensional compatibility 
with classical physical quantities. In an equation of the form
$\mathcal{D}_t^\alpha f(t) = g(t)$, 
the right-hand side must have dimension $[T]^{-\alpha}[F]$, which generally 
differs from the dimension associated with the corresponding classical model 
(where $\alpha = 1$). 

A second issue is the implicit dependence on a characteristic time scale. 
To restore dimensional consistency, it is often necessary to introduce a 
parameter $\tau$ with dimension of time and consider instead an equation of the form
$\tau^{1-\alpha} \mathcal{D}_t^\alpha f(t) = g(t)$. 
This shows that fractional derivatives are not intrinsically scale-free: 
their use in physical models typically requires the introduction of an 
external time scale, which does not arise in the first-order case.

These features also affect the interpretation of model parameters. For instance, 
replacing the classical diffusion equation
\[
\frac{\partial u}{\partial t} = D \nabla^2 u
\]
by its fractional counterpart
\[
\mathcal{D}_t^\alpha u = D \nabla^2 u
\]
implies that the diffusion coefficient $D$ acquires dimension $[L]^2 [T]^{-\alpha}$, 
rather than the standard $[L]^2 [T]^{-1}$. Thus, the parameter $D$ no longer 
represents a diffusion constant in the classical sense, and its physical 
interpretation must be reconsidered. 

One way to circumvent these issues, discussed in \cite{VazCapelas}, is through 
the introduction of the \textit{dimensional regularized} Caputo derivative $\mathsf{D}_t^\dagger$ 
defined as 
\begin{equation}
\mathsf{D}_t^\dagger = t^{\alpha-1} \sideset{_{\scriptscriptstyle{\textrm C}}^{}}{_t^{\alpha}}{\operatorname{\mathit D}} , 
\end{equation} 
where $\sideset{_{\scriptscriptstyle{\textrm C}}^{}}{_t^{\alpha}}{\operatorname{\mathit D}}$ denotes the Caputo fractional derivative operator. 
Also in \cite{VazCapelas} some fractional 
differential equations involving this dimensional regularized
Caputo derivative have been solved and compared with the
solutions from equations involving the standard Caputo fractional derivative. 

Although the dimensional regularized Caputo derivative was introduced in the context 
of the dimensional analysis of fractional differential equations, there is another 
very interesting interpretation for it.
Actually, let $K : (0,\infty) \to \mathbb{R}_+$ be a memory kernel, and define
\begin{equation}
\label{memory.operator}
\mathcal{M}_{\scriptscriptstyle K} f(t)
=
\int_0^t K(t-\tau)\, f^\prime(\tau)\, d\tau.
\end{equation}
This operator can be interpreted as a cumulative contribution of past variations, 
where the kernel $K$ determines how strongly each past instant influences the 
present state. The convolution-type nonlocal operators \eqref{memory.operator}, with kernels generated by the tails of L\'evy measures of subordinators, have been introduced and studied by Kochubei \cite{Kochubei2011} and Toaldo \cite{Toaldo2015}, among others. We adopt a different approach.

If $K$ is integrable on $(0,t)$, one may introduce the normalized density
\begin{equation}
\label{prob.density}
p_t(\tau)
=
\frac{K(t-\tau)}{\displaystyle \int_0^t K(t-\tau^\prime)\,d\tau^\prime},
\qquad 0 \leq \tau \leq t,
\end{equation}
and interpret \eqref{memory.operator} as
\begin{equation}
\label{expectation.form}
\mathcal{M}_{\scriptscriptstyle K} f(t)
=
\left(\int_0^t K(t-\tau^\prime)\,d\tau^\prime\right)
\mathbb{E}\bigl[f^\prime(T_t)\bigr],
\end{equation}
where $T_t$ is a random variable with density $p_t$. In this form, the operator 
$\mathcal{M}_{\scriptscriptstyle K}$ appears as a rescaled expectation of the ordinary first-order derivative 
evaluated at a random past time. This provides a natural probabilistic 
interpretation: the system evolves according to an effective rate obtained by 
sampling its past history with a prescribed distribution.

A particularly important case arises when the kernel is a power law,
\begin{equation}
K(r) = \frac{r^{-\alpha}}{\Gamma(1-\alpha)}, 
\qquad 0 < \alpha < 1.
\end{equation}
In this case, the operator \eqref{memory.operator} coincides with the 
Caputo fractional derivative,
\begin{equation} 
\label{Caputo.def}
\sideset{_{\scriptscriptstyle{\textrm C}}^{}}{_x^{\alpha}}{\operatorname{\mathit D}} f(t)
=
\frac{1}{\Gamma(1-\alpha)}
\int_0^t (t-\tau)^{-\alpha} f^\prime(\tau)\,d\tau.
\end{equation}
A direct calculation shows that
$$
\int_0^t K(t-\tau^\prime)\, d\tau^\prime = \frac{1}{\Gamma(1-\alpha)} 
\int_0^t (t-\tau^\prime)^{-\alpha}\,d\tau^\prime
=
\frac{t^{1-\alpha}}{\Gamma(2-\alpha)},
$$
and from \eqref{expectation.form}, 
\begin{equation}
\label{caputo.expectation}
\sideset{_{\scriptscriptstyle{\textrm C}}^{}}{_x^{\alpha}}{\operatorname{\mathit D}} f(t)
=
\frac{t^{1-\alpha}}{\Gamma(2-\alpha)}
\,\mathbb{E}\bigl[f^\prime(T_t)\bigr],
\end{equation}
where $T_t$ is a random variable on $[0,t]$ with density proportional to 
$(t-\tau)^{-\alpha}$. As a consequence, 
\begin{equation}
\label{caputo.dim.regularized.expectation}
\mathsf{D}_t^\dagger f(t) = \frac{1}{\Gamma(2-\alpha)} \mathbb{E}\bigl[f^\prime(T_t)\bigr].
\end{equation}

Thus we see that the dimensional regularized Caputo derivative can be interpreted as 
a rescaled expectation of the ordinary first-order derivative 
evaluated at a random past time. 
This interpretation clarifies both the origin of non-locality 
and the anomalous dimensional behavior of fractional operators, and suggests 
natural generalizations obtained by replacing the underlying probability 
distribution, which opens the possibility of modeling different types of memory mechanisms. 

The natural generalization of a probability distribution proportional to 
$(t-\tau)^{-\alpha}$ with $\tau\in [0,t]$ is the \textit{beta distribution}. The aim of this work 
is to introduce this new type of derivative involving the beta distribution 
and investigate its applications. 

This work is organized as follows. In Section~2 we introduce the beta-weighted 
derivative and discuss its main analytical properties. We show how this operator 
naturally arises from a probabilistic averaging procedure based on the beta 
distribution and discuss its connections 
with the Caputo derivative and the Erd\'elyi-Kober
fractional integral and derivative. We also derive its Mellin transform.
In Section~3 we study the first-order differential equation associated with 
the beta-weighted derivative. By applying Mellin transform techniques, we obtain 
explicit solutions in terms of a new class of special functions represented through 
Barnes $G$-functions and Mellin--Barnes integrals. Motivated by this representation, 
we introduce the so-called $G$-hypergeometric functions and investigate their 
convergence properties. We further establish the complete monotonicity of the 
solutions, derive their asymptotic expansions and obtain representations for their 
Laplace transforms together with their asymptotic behavior.
Section~4 is devoted to second-order differential equations involving sequential 
beta-weighted derivatives. We analyze the corresponding homogeneous equation and 
derive explicit solutions in terms of the functions introduced previously. 
In Section~5 we present stochastic applications of the theory developed in the 
previous sections. We construct continuous-time random walks with waiting-time 
distributions governed by the beta-weighted relaxation equation and study their 
asymptotic scaling limits. In particular, we establish convergence results toward 
subordinated Brownian motions and discuss the relation with anomalous diffusion and 
fractional heat equations. We also introduce a class of time-changed Brownian 
motions whose one-dimensional distributions satisfy diffusion equations involving 
the beta-weighted derivative. Finally, in the appendices, we collect several 
auxiliary results concerning the Barnes $G$-function, Mellin--Barnes integrals, 
residue calculations, and asymptotic estimates used throughout the paper.

\section{The beta-weighted derivative}

Let $(\Omega,\mathcal{F},P)$ be a probability space and let $T_t$ be a random variable
taking values in $(0,t)$, for $t \in \mathbb{R}^+$. Assume that $T_t$ has probability density
$p_{(0,t)}^{(a,b)}(\tau)$ given by
\begin{equation}
\label{beta.pdf}
p_{(0,t)}^{(a,b)}(\tau) =
\begin{cases}
\displaystyle 
\frac{\tau^{a-1}(t-\tau)^{b-1}}{t^{a+b-1} B(a,b)} ,
& \tau \in (0,t), \\[1ex]
0 , & \text{otherwise},
\end{cases}
\end{equation}
where $a,b>0$. This is the beta probability density on the interval $(0,t)$.

Let $f:\mathbb{R}^+ \to \mathbb{R}$ be a continuously differentiable function. 
The expected value of $f(T_t)$ is given by
\begin{equation}
\mathbb{E}_{(0,t)}^{(a,b)}(f) 
= \mathbb{E}\big(f(T_t)\big)
= \int_0^t p_{(0,t)}^{(a,b)}(\tau) f(\tau) \, d\tau .
\end{equation}

We now introduce parameters $\alpha \in (0,1)$ and $\nu>-1$ such that
\begin{equation}
a = 1 + \nu , \qquad b = 1 - \alpha .
\end{equation}
We define the operator $(\mathbb{D}_t^{(\alpha,\nu)} f)(t)$ as the expected value
of the derivative $f^\prime(T_t)$, namely,
\begin{equation}
\label{mathcal.D.0}
\mathbb{D}_t^{(\alpha,\nu)}f 
= \mathbb{E}_{(0,t)}^{(\nu+1,1-\alpha)}\big(f^\prime(T_t)\big)
= \frac{t^{\alpha-\nu-1}}{B(1-\alpha,1+\nu)} 
\int_0^t \tau^\nu (t-\tau)^{-\alpha} f^\prime(\tau) \, d\tau .
\end{equation}

It is convenient to introduce, in analogy with
eq.\eqref{caputo.dim.regularized.expectation}, the following rescaled operator.

\begin{definition}
\label{def:beta-weighted-derivative}
Let $\alpha \in (0,1)$ and $\nu>-1$. For a differentiable function 
$f:\mathbb{R}^+ \to \mathbb{R}$, the beta-weighted derivative
of $f$ is defined by
\begin{equation}
\label{mathcal.D.2}
(\mathscr{D}_t^{(\alpha,\nu)} f)(t) := 
\frac{t^{\alpha-\nu-1}}{\Gamma(1-\alpha)} 
\int_0^t \tau^\nu (t-\tau)^{-\alpha} f^\prime(\tau) \, d\tau .
\end{equation}
\end{definition}

The operators $\mathbb{D}_t^{(\alpha,\nu)}$ and $\mathscr{D}_t^{(\alpha,\nu)}$ 
are related by
\begin{equation}
\label{mathcal.D.1}
\mathbb{D}_t^{(\alpha,\nu)} f 
= \frac{\Gamma(2+\nu-\alpha)}{\Gamma(1+\nu)} 
\, \mathscr{D}_t^{(\alpha,\nu)} f .
\end{equation}

The operator $\mathscr{D}_t^{(\alpha,\nu)}$ can be interpreted as a 
non-local differential operator obtained by a beta-weighted averaging 
of the ordinary first-order derivative over the interval $(0,t)$. In particular, 
it provides a probabilistic interpretation of non-local operators, 
where the memory kernel arises naturally from the beta distribution.

As an example, from eq.\eqref{mathcal.D.2} it follows that 
\begin{equation}
\label{eq.derivative.rho}
\mathscr{D}_t^{(\alpha,\nu)}t^\rho = 
\begin{cases} \displaystyle \frac{\rho \Gamma(\nu+\rho)}{\Gamma(\nu+\rho+1-\alpha)} 
t^{\rho - 1} , & \; \rho \neq 0 , \\[1ex]
0 , & \; \rho = 0 , 
\end{cases}
\end{equation}
with $1-\alpha > 0$ and $\nu +\rho > -1$ to guarantee the 
convergence of the integral in eq.\eqref{mathcal.D.2}.

\subsection{Connections with Caputo and Erd\'elyi-Kober operators}

Next we establish the analytical connections between the beta-weighted derivative $\mathscr{D}_t^{(\alpha,\nu)}$ and known differential and fractional operators. We identify its classical limit, its relation with the Caputo derivative, its representation via Erd\'elyi-Kober (EK) fractional integrals, and the corresponding inverse operator. We also compare it with Caputo-type modifications of 
EK derivatives, with emphasis on scaling properties.

\paragraph{(i) Classical limit.} The Gelfand-Shilov distribution is defined as  
\begin{equation}
G_\nu(t) = \begin{cases} \displaystyle \frac{t^{\nu-1}}{\Gamma(\nu)}H(t) , & \quad \nu > 0 , \\
G_{\nu+1}^\prime(t) , & \quad \nu \leq 0 . 
\end{cases} 
\end{equation}
and is such that 
\begin{equation}
\lim_{\nu\to 0} G_\nu(t) = \delta(t)   
\end{equation}
where  $\delta(t)$ is the Dirac delta function.  Note that 
\begin{equation}
p^{(a,b)}_{(0,t)}(\tau) = \frac{\Gamma(a+b)}{t^{a+b-1}} G_a(t) G_b(t-\tau) , 
\end{equation} 
and then 
\begin{equation}
\lim_{\substack{a\to 1\\ b\to 0}} p_{(0,t)}^{(a,b)}(\tau) = \delta(t-\tau) , \
\qquad
\lim_{\substack{a\to 0\\ b\to 1}} p_{(0,t)}^{(a,b)}(\tau) = \delta(\tau) .
\end{equation}
So we have
\begin{equation}
\lim_{\substack{\alpha\to 1\\ \nu \to 0}} \mathscr{D}_t^{(\alpha,\nu)} f = 
f^\prime(t) . 
\end{equation}
The same holds, of course, for $\mathbb{D}_t^{(\alpha,\nu)}f$. 

\medskip

\paragraph{(ii) Caputo-type reduction.} If $\alpha < 1$ and $\nu = 0$ we have 
\begin{equation}
\mathscr{D}_t^{(\alpha,0)} f = t^{\alpha-1} \, \sideset{_{\scriptscriptstyle{\textrm C}}^{}}{_t^{\alpha}}{\operatorname{\mathit D}} f
\end{equation}
which is the dimensional regularized Caputo derivative \cite{VazCapelas}. 
It can also be seen as a particular case of the stretched derivative 
$\mathcal{D}^{(\alpha,\rho)}_t f = t^{-\rho} \sideset{_{\scriptscriptstyle{\textrm C}}^{}}{_t^{\alpha}}{\operatorname{\mathit D}}[f(t)]$ considered in \cite{BLV} with $\rho = 1-\alpha$. 

\medskip

\paragraph{(iii) EK representation and inverse.} We also note that 
\begin{equation}
\mathscr{D}_t^{(\alpha,\nu)} f = 
\mathfrak{I}_t^{(1-\alpha,\nu)} f^\prime,  \label{EKint}
\end{equation}
where 
$$
\mathfrak{I}_t^{(\mu,\nu)} f = 
\frac{t^{-(\mu+\nu)}}{\Gamma(\mu)} \int_0^t \tau^\nu (t- \tau)^{\mu-1}\, f(\tau)\, d\tau 
$$
is the EK fractional integral of order $\mu$. We will assume 
$\nu > -1$. Let us explore this fact a little further.

In \cite{LUC} the authors defined (adapting their notation 
to ours) the right-hand sided EK fractional integral of orders $\mu$ and $\nu$ by 
\begin{equation}\label{EKint2}
\left(I_\beta^{\nu,\mu} f\right)(t) = 
\frac{\beta}{\Gamma(\mu)} t^{-\beta(\nu+\mu)} 
\int_0^t (t^\beta-\tau^\beta)^{\mu-1} \tau^{\beta(\nu+1)-1} f(\tau)\, d\tau ,
\end{equation}
with $\mu,\beta > 0$ and $\nu \in \mathbb{R}$. We therefore have a slight difference in notation, namely,
$$
\mathfrak{I}_t^{(\mu,\nu)} = I_1^{(\nu,\mu)} . 
$$
The right-hand sided EK fractional derivative of order $\mu$ 
($n-1 < \mu \leq n$, $n\in \mathbb{N}$), is defined as 
\begin{equation}
\left(D_\beta^{\nu,\mu}f\right)(t) = 
{\displaystyle \prod_{j=1}^n} \left(\nu+j + \frac{1}{\beta} t \frac{d\;}{dt}\right) 
\left(I_\beta^{\nu+\mu,n-\mu}f \right)(t) . \label{EKder}
\end{equation}

It is proved in \cite{LUC} that the EK derivative \eqref{EKder} is the left-inverse operator of the EK integral \eqref{EKint2}, in the space $C_\alpha$, where the latter denotes the space of all functions $f(\cdot)$ 
such that $f(t)=t^pf_1(t),$ for $p>\alpha$, $f_1 \in C[0,+\infty)$. Thus 
\begin{equation}
    D_\beta^{\nu,\mu}I_\beta^{\nu,\mu} f(t) \equiv f(t), \qquad f \in C_\alpha, \alpha>\beta(\gamma+1).
\end{equation}

By recalling the relationship of the beta-weighted derivative with the EK integral, given in \eqref{EKint}, we can write the left-inverse operator of $\mathscr{D}_t^{(\alpha,\nu)}$ as follows:
$$
\mathscr{I}_t^{(\alpha,\nu)}f:=\int_0^t \left(D_1^{\nu,1-\alpha}f\right)(t)dt,
$$
since 
$$
\mathscr{I}_t^{(\alpha,\nu)}\mathscr{D}_t^{(\alpha,\nu)}f(t)=\int_0^t D_1^{\nu,1-\alpha}I_1^{\nu,1-\alpha} f^\prime(\tau)d\tau=f(t)-f(0),\qquad t >0.
$$

\paragraph{(iv) Caputo-type EK derivative and scaling.} The right-hand sided Caputo-type modification of the EK fractional derivative 
of order $\mu$ is defined as 
\begin{equation}
\sideset{_{\ast}^{}}{_{\beta}^{\nu,\mu}}{\operatorname{\mathit{D}}} f(t) = 
\left( I_\beta^{\nu+\mu,n-\mu} 
{\displaystyle \prod_{k=0}^{n-1}}\left(1+\nu+k+\frac{1}{\beta} t\frac{d\;}{dt}\right) 
f\right)(t) . 
\end{equation}

For $n = 1$ and $\beta=1$ the above expression reduces to  
\begin{equation}
\sideset{_{\ast}^{}}{_{}^{\nu,\mu}}{\operatorname{\mathit{D}}} f(t) = 
\frac{1}{\Gamma(\mu)} t^{-(\nu+\mu)} 
\int_0^t (t-\tau)^{\mu-1} \tau^{\nu}
\bigg( 1+\nu+ \tau\frac{d\;}{d\tau}\bigg) f (\tau) \, d\tau , 
\end{equation}
and denoting 
$$
\mu-1 = -\alpha
$$
with $0 < \alpha < 1$, we have
\begin{equation}
\label{EKCaputo}
\sideset{_{\ast}^{}}{_{}^{\nu,1-\alpha}}{\operatorname{\mathit{D}}} f(t) = 
\frac{1}{\Gamma(1-\alpha)} t^{-(\nu+1-\alpha)} 
\int_0^t (t-\tau)^{-\alpha} \tau^{\nu}
\bigg( 1+\nu+ \tau\frac{d\;}{d\tau}\bigg) f (\tau) \, d\tau . 
\end{equation}
Note the differences between eq.\eqref{EKCaputo} and eq.\eqref{mathcal.D.0} or 
eq.\eqref{mathcal.D.2}. In particular, the operator $\sideset{_{\ast}^{}}{_{}^{\nu,1-\alpha}}{\operatorname{\mathit{D}}}$ is \textit{scale-invariant}, that is, if $t \mapsto \lambda t$ ($\lambda > 0$), then $\sideset{_{\ast}^{}}{_{}^{\nu,1-\alpha}}{\operatorname{\mathit{D}}} f 
\mapsto \sideset{_{\ast}^{}}{_{}^{\nu,1-\alpha}}{\operatorname{\mathit{D}}} f$, while the operator 
$\mathscr{D}_t^{(\alpha,\nu)}$ is not, that is, if $t \mapsto \lambda t$ then 
$\mathscr{D}_{\lambda t}^{(\alpha,\nu)} f = \lambda^{-1} \mathscr{D}_t^{(\alpha,\nu)}$.  
Thus $\sideset{_{\ast}^{}}{_{}^{\nu,1-\alpha}}{\operatorname{\mathit{D}}}$ is 
a generalization of the differential operator $t\frac{d\;}{dt}$, while 
$\mathscr{D}_t^{(\alpha,\nu)}$ is a generalization of $\frac{d\;}{dt}$. 



\subsection{Mellin transform of the Beta-weighted derivative}

Let $\mathcal{M}[f(t),s]$ be the Mellin transform
$$
\mathcal{M}[f(t);s] = F(s) = \int_0^\infty f(t) t^{s-1}\, dt . 
$$
We want to show the following
\begin{lem}
\label{lemma.mellin}
Let $0<\alpha<1$ and $\nu>-1$. Suppose that $f$ is such that the Mellin transforms exist, the boundary term in the Mellin transform of $f^\prime$ vanishes, and the Mellin convolution theorem applies. Then
\begin{equation}
\label{mellin.derivative}
\mathcal{M}[\mathscr{D}_t^{(\alpha,\nu)} f;s]
=
 \frac{(1-s)\Gamma(1+\nu-s)}{\Gamma(2+\nu-\alpha-s)}F(s-1). 
\end{equation}
\end{lem}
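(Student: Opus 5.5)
Write the operator as a Mellin convolution. Substituting $\tau = t u$ in \eqref{mathcal.D.2} gives
\begin{equation}
(\mathscr{D}_t^{(\alpha,\nu)} f)(t) = \frac{1}{\Gamma(1-\alpha)} \int_0^1 u^{\nu}(1-u)^{-\alpha} f^\prime(tu)\, du .
\end{equation}
This can be rewritten as $\int_0^\infty k(t/\tau)\, f^\prime(\tau)\, d\tau/\tau$ with
\begin{equation}
k(x) = \frac{1}{\Gamma(1-\alpha)} x^{-\nu-1}\left(1-\frac{1}{x}\right)^{-\alpha} H(x-1)
= \frac{1}{\Gamma(1-\alpha)} x^{\alpha-\nu-1}(x-1)^{-\alpha} H(x-1).
\end{equation}
The operator is therefore the Mellin convolution $(k \ast f^\prime)(t)$. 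Equivalently, it is the EK integral $\mathfrak{I}^{(1-\alpha,\nu)}_t$ applied to $f^\prime$, as in \eqref{EKint}. By the Mellin convolution theorem, which is assumed to apply, we get
\begin{equation}
\mathcal{M}[\mathscr{D}_t^{(\alpha,\nu)} f; s] = K(s)\, \mathcal{M}[f^\prime; s].
\end{equation}

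Two transforms are then needed. For the derivative, integration by parts gives $\mathcal{M}[f^\prime;s] = [f(t)t^{s-1}]_0^\infty - (s-1)\int_0^\infty f(t) t^{s-2}\,dt$. The boundary term vanishes by hypothesis, so $\mathcal{M}[f^\prime;s] = (1-s)F(s-1)$.

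For the kernel,
\begin{equation}
K(s) = \frac{1}{\Gamma(1-\alpha)}\int_1^\infty x^{s+\alpha-\nu-2}(x-1)^{-\alpha}\,dx .
\end{equation}
Substitute $x = 1/y$ to get
\begin{equation}
\frac{1}{\Gamma(1-\alpha)}\int_0^1 y^{\nu-s}(1-y)^{-\alpha}\,dy = \frac{B(1+\nu-s,1-\alpha)}{\Gamma(1-\alpha)} = \frac{\Gamma(1+\nu-s)}{\Gamma(2+\nu-\alpha-s)},
\end{equation}
valid for $\mathrm{Re}\, s < 1+\nu$.

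Multiplying the two transforms gives \eqref{mellin.derivative}. The only delicate point is the domain of validity: $\mathrm{Re}\,s<1+\nu$ from the kernel, intersected with the strip where $F(s-1)$ exists. This strip is covered by the hypotheses on $f$, and consistency can be checked against \eqref{eq.derivative.rho}. As an alternative to the convolution theorem, one can insert the $u$-integral representation directly and swap integrals by Fubini. The inner integral $\int_0^\infty f^\prime(tu)t^{s-1}dt = u^{-s}\mathcal{M}[f^\prime;s]$, and the resulting $u$-integral is the same beta integral.
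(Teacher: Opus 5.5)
Your proposal is correct and follows the paper's route: substitute $\tau = tu$, apply the Mellin convolution theorem, get $\mathcal{M}[f';s]=(1-s)F(s-1)$ by integration by parts, and evaluate the kernel transform as a beta integral. The only difference is cosmetic: you write the kernel in the standard $k(t/\tau)\,d\tau/\tau$ form with $k(x)=x^{-1}g(1/x)$, so your $K(s)$ is exactly the paper's $G(1-s)=\Gamma(1+\nu-s)/\Gamma(2+\nu-\alpha-s)$.
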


\begin{proof} If the boundary term in the Mellin transform of 
$f^\prime$ vanishes we have  (\cite{Podlubny})
$$
\mathcal{M}[f^\prime(t);s] = (1-s)F(s-1) , 
$$
and the Mellin convolution theorem asserts that 
$$
\mathcal{M}\left[\int_0^\infty g(\tau) f(t \tau)\, d\tau\right] = F(s) G(1-s) , 
$$
where $G(s) = \mathcal{M}[g(t);s]$. 
Now using $\tau = t u$ in eq.\eqref{mathcal.D.2} we have 
$$
\mathscr{D}_t^{(\alpha,\nu)} f   =  
\frac{1}{\Gamma(1-\alpha)} 
\int_0^1 u^\nu (1-u)^{-\alpha} f^\prime(t u) \, du =
\int_0^\infty g(u) f^\prime(t u)\, d u ,
$$
with 
$$
g(t) = \frac{t^\nu (1-t)^{-\alpha}}{\Gamma(1-\alpha)} H(1-t) ,
$$
where $H(\cdot)$ is the Heaviside step function. The Mellin 
transform of $g(t)$ is 
$$
\mathcal{M}[g(t);s] = 
\int_0^1 \frac{ t^{\nu+s-1} (1-t)^{-\alpha}}{\Gamma(1-\alpha)} \, dt = 
\frac{ B(\nu+s,1-\alpha)}{\Gamma(1-\alpha)} = \frac{\Gamma(\nu+s)}{\Gamma(1+\nu-\alpha+s)} .
$$
Using, from the convolution theorem, 
\begin{equation}
\mathcal{M}[\mathscr{D}_t^{(\alpha,\nu)} f ;s] = 
 \frac{(1-s)\Gamma(1+\nu-s)}{\Gamma(2+\nu-\alpha-s)}F(s-1). 
\end{equation}
\end{proof}

\section{First order differential equation}
\label{section.6}

Let us consider the differential equation 
\begin{equation}
\label{mathcal.eq.1}
\mathscr{D}_t^{(\alpha,\nu)} f = - \kappa f, 
\end{equation}
which corresponds to 
\begin{equation}
\label{mathcal.eq.2}
\mathbb{D}_t^{(\alpha,\nu)} f = - \kappa^\prime f ,
\end{equation}
where 
$$
\kappa^\prime = \frac{\Gamma(2+\nu-\alpha)}{\Gamma(1+\nu)}  \kappa .
$$

We are now able to present the main result of this section.

\begin{theorem}\label{main}
    The solution to equation \eqref{mathcal.eq.1}, with initial condition $f(0)=1$, is given by
\begin{equation}
\label{eq.solution}
f(t) = \Psi_\alpha^\nu(-\kappa t)  := 
\frac{G(1+\nu)}{G(2-\alpha+\nu)}\sum_{n=0}^\infty   \frac{G(2+n-\alpha+\nu)}{G(1+n+\nu)} \frac{(-\kappa t)^n}{n!} ,
\end{equation}
where $G(\cdot)$ is the Barnes $G$-function (see Appendix~\ref{appendix.Barnes}).    
\end{theorem}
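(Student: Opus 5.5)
The plan is to verify the series directly with the power rule \eqref{eq.derivative.rho}, and then to use the Mellin computation of Lemma~\ref{lemma.mellin} only as the heuristic that produces the coefficients. I start from the ansatz $f(t)=\sum_{n\ge0}c_n t^n$ with $c_0=f(0)=1$ and apply $\mathscr{D}_t^{(\alpha,\nu)}$ term by term. By \eqref{eq.derivative.rho} with $\rho=n\ge1$ (admissible, since $\nu+n>-1$),
\begin{equation*}
\mathscr{D}_t^{(\alpha,\nu)}t^n=\frac{n\,\Gamma(\nu+n)}{\Gamma(\nu+n+1-\alpha)}\,t^{n-1},
\end{equation*}
and the constant term is annihilated. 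Equating coefficients of $t^{n-1}$ in $\mathscr{D}_t^{(\alpha,\nu)}f=-\kappa f$ gives the first-order recursion
\begin{equation*}
c_n=-\kappa\,\frac{\Gamma(\nu+n+1-\alpha)}{n\,\Gamma(\nu+n)}\,c_{n-1},\qquad n\ge1 .
\end{equation*}

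Next I solve the recursion with the functional equation $G(z+1)=\Gamma(z)G(z)$ from Appendix~\ref{appendix.Barnes}. Telescoping gives $\prod_{k=1}^n\Gamma(\nu+k)=G(1+n+\nu)/G(1+\nu)$ and $\prod_{k=1}^n\Gamma(\nu+k+1-\alpha)=G(2+n-\alpha+\nu)/G(2-\alpha+\nu)$. Hence
\begin{equation*}
c_n=\frac{(-\kappa)^n}{n!}\,\frac{G(1+\nu)}{G(2-\alpha+\nu)}\,\frac{G(2+n-\alpha+\nu)}{G(1+n+\nu)},
\end{equation*}
which is exactly \eqref{eq.solution}. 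As a consistency check, the same recursion appears if one takes the Mellin transform of \eqref{mathcal.eq.1} with \eqref{mellin.derivative}. That yields $F(s)(1-s)\Gamma(1+\nu-s)/\Gamma(2+\nu-\alpha-s)=-\kappa F(s+1)$ after shifting $s\mapsto s+1$, and its residues at $s=-n$ reproduce the same $c_n$.

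The main obstacle is analytic, namely justifying the termwise differentiation and the interchange of sum and integral. For this I first bound the coefficient ratio: $|c_n/c_{n-1}|=\kappa\,\Gamma(n+\nu+1-\alpha)/(n\Gamma(n+\nu))\sim\kappa n^{-\alpha}\to0$. So the series defines an entire function, with $|c_n|$ decaying roughly like $\kappa^n/(n!)^{\alpha}$. The derivative series $\sum nc_nt^{n-1}$ therefore converges uniformly on $[0,t]$. Since $\tau^\nu(t-\tau)^{-\alpha}$ is integrable for $\nu>-1$ and $\alpha<1$, dominated convergence allows $\int_0^t\tau^\nu(t-\tau)^{-\alpha}\sum_n nc_n\tau^{n-1}\,d\tau$ to be computed term by term. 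This shows that $f$ solves \eqref{mathcal.eq.1} and that $f(0)=1$.

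If uniqueness is also intended, I would take the difference $h$ of two solutions. Then $h(0)=0$ and $\mathscr{D}_t^{(\alpha,\nu)}h=-\kappa h$, and applying the left inverse $\mathscr{I}_t^{(\alpha,\nu)}$ of subsection (iii) gives a Volterra-type integral equation for $h$. A Gronwall/Picard iteration would then force $h\equiv0$. I expect this step to need the most care with the function spaces $C_\alpha$. I would first try restricting to functions analytic at $0$, where the coefficient recursion already gives uniqueness directly.
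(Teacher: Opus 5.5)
Your proof is correct, and it takes a genuinely different route from the paper.

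\textbf{The paper's route.} The paper works entirely on the Mellin side. It transforms \eqref{mathcal.eq.1} using Lemma~\ref{lemma.mellin} and solves the resulting difference equation for $F(s)$ via $G(z+1)=\Gamma(z)G(z)$. It then writes $f$ as the Mellin--Barnes integral \eqref{inverse.mellin}. Finally it recovers the series by closing the contour to the left and summing the residues of $\Gamma(s)$ at $s=-n$, using Stirling and the Ferreira--L\'opez asymptotics of $\log G$ to discard the arc.

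\textbf{Your route.} You go the other way. You start from a power-series ansatz, apply the power rule \eqref{eq.derivative.rho}, and get a first-order recursion, which you telescope with the same functional equation of $G$. You then verify a posteriori that the series actually solves the problem: it defines an entire function, and dominated convergence against $\tau^\nu(t-\tau)^{-\alpha}$ justifies the termwise computation.

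\textbf{What each buys.} Your route is more elementary and, as an existence proof, more self-contained. It needs no a priori assumption that $f$ has a Mellin transform with vanishing boundary terms, and no contour estimates. For the actual solution, which decays like $t^{-1-\nu}$, $F(s)$ is defined on $0<\operatorname{Re}s<1+\nu$ and $F(s-1)$ on $1<\operatorname{Re}s<2+\nu$. These strips do not overlap when $-1<\nu\le 0$, so the paper's difference equation is only meaningful after analytic continuation. What the paper's route buys is the integral representation \eqref{inverse.mellin} itself. That representation is what the paper uses later for the large-$t$ expansion \eqref{asymptotic.psi} and for Theorem~\ref{theorem.laplace.3cases}; the series alone says nothing about $t\to\infty$.

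\textbf{Uniqueness.} The paper does not prove uniqueness either, since its difference equation fixes $F$ only up to a $1$-periodic factor. So your verification covers everything the paper's proof establishes. Uniqueness among functions analytic at $0$ is immediate from your recursion.

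\textbf{A small slip.} In your consistency check, after $s\mapsto s+1$ the Mellin relation should read $-s\,\Gamma(\nu-s)F(s)/\Gamma(1+\nu-\alpha-s)=-\kappa F(s+1)$. You shifted $F$ but not its coefficient. This does not affect the argument.
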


\begin{proof}
Taking the 
Mellin transform of eq.\eqref{mathcal.eq.1} and using Lemma~\ref{lemma.mellin} 
we obtain 
\begin{equation}
\label{eq.S}
\frac{F(s)}{F(s-1)} = -\frac{(1-s)}{\kappa} \frac{\Gamma(1+\nu-s)}{\Gamma(2+\nu-\alpha-s)} . 
\end{equation}
Using $\Gamma(z+1) = z \Gamma(z)$ and eq.\eqref{barnes.g}, we can write $F(s)$ as 
\begin{equation}
F(s) = f_0 \kappa^{-s} \Gamma(s) \frac{G(2+\nu-\alpha-s)}{G(1+\nu-s)} , \label{mel} 
\end{equation}
where $f_0$ is arbitrary.  
Using the inversion formula for the Mellin transform, we have
\begin{equation}
\label{inverse.mellin}
f(t) = \frac{f_0}{2\pi i} \int_{c-i\infty}^{c+i\infty} (\kappa t)^{-s} 
\frac{\Gamma(s)G(2+\nu-\alpha-s)}{G(1+\nu-s)} ds .
\end{equation}

Next we will evaluate the integral in eq.\eqref{inverse.mellin}. 
The poles of the integrand are the ones of 
$\Gamma(s)$, that is, $s_n = -n$ ($n=0,1,2,\ldots$), and 
the ones of $1/G(1+\nu-s)$, that is, $1+\nu-s_m = -m$ ($m=0,1,2,\ldots$), i.e., 
$s_m = m +1 +\nu$ ($m=0,1,2,\ldots$). Since we are assuming $\nu > -1$,   
if we choose $c$ such that $0 < c < \zeta$  with $\zeta = 1+\nu > 0$,  then the poles of $\Gamma(s)$ 
will be located on the left and the poles of $1/G(1+\nu-s)$ will be located 
on the right of the vertical line $(c-i\infty,c+i\infty)$.  

In order to analyse the convergence of this integral, we will consider 
the limit $N \to \infty$ for the integral along the line segment $(c-i(N+1/2),c+i(N+1/2))$ 
(the dashed line segment in Figure~\ref{fig.2}). 
Moreover, since the integrand is a holomorphic function in the 
entire plane except at the poles above, we can deform the line segment  
$(c-i(N~+~1/2), c+i(N+1/2))$  into the contour $C_N^- \, \cup \,  C_\epsilon  \, \cup \,  C_N^+ $, where 
$C_N^- = (-i(N~+~1/2),-\epsilon)$, $C_N^+ = (\epsilon,i(N+1/2))$ and 
$C_\epsilon$ is the arc $\{s \in \mathbb{C}\,|\,
|s| = \epsilon, -\pi/2 \leq \operatorname{arg}(s) \leq \pi/2\}$ 
encircling the pole $s=0$ from the right (see Figure~\ref{fig.2}) . 

\begin{figure}[h]  
\begin{center}
\begin{tikzpicture}[scale=1]
\draw[->] (-6,0) -- (2,0) node[below] {${\scriptscriptstyle \operatorname{Re}s}$};
\draw[->] (0,-4.7) -- (0,4.8) node[right] {${\scriptscriptstyle \operatorname{Im}s}$};
\draw[orange,fill=orange] (0,0) circle (1.2pt);
\draw[orange,fill=orange] (-1,0) circle (1.2pt);
\draw[orange,fill=orange] (-2,0) circle (1.2pt);
\draw[orange,fill=orange] (-4,0) circle (1.2pt);
\draw[orange,fill=orange] (-5,0) circle (1.2pt);
\draw[orange,fill=orange] (0.8,0) circle (1.2pt);
\draw[-] (1,0.05) -- (1,-0.05);
\node[below] at (-1,0) {${\scriptscriptstyle -1}$};
\node[below right] at (1,0) {${\scriptscriptstyle 1}$};
\node[below left] at (0,0) {${\scriptscriptstyle 0}$};
\node[below] at (-2,0) {${\scriptscriptstyle -2}$};
\node[below] at (-3,0) {$\cdots$};
\node[below] at (-4,0) {${\scriptscriptstyle -N}$};
\node[below] at (-5.2,0) {${\scriptscriptstyle -(N+1)}$};
\node[below] at (0.8,0) {${\scriptscriptstyle \zeta}$};
\draw[cyan,very thick] (0,0.3) -- (0,4.5);
\draw[very thick,dashed] (0.55,-4.5) -- (0.55,4.5);
\draw[cyan,very thick] (0,-0.3) -- (0,-4.5);
\draw[->,>=triangle 45,cyan,very thick] (0,1) -- (0,1.9);
\draw[cyan, very thick] (0,4.5) arc (90:270:4.5);
\draw[cyan, very thick,->,>=triangle 45] (0,4.5) arc (90:160:4.5);
\draw[cyan, very thick] (0,-0.3) arc (-90:90:0.3);
\node[below] at (0.3,-0.3) {\footnotesize ${\textcolor{cyan}{C_\epsilon }}$};
\node[left] at (0,3) {\footnotesize ${\textcolor{cyan}{C_N^+ }}$};
\node[left] at (0,-3) {\footnotesize ${\textcolor{cyan}{C_N^-}}$};
\node[right] at (0.8,-4.5) {$c-i(N+1/2)$};
\node[right] at (0.8,4.5) {$c+i(N+1/2)$};
\node[below, cyan] at (-5,2.6) {$C_N$};
\draw[->] (0,0) -- (-3.15,3.15);
\node[right] at (-2,2) {${\scriptscriptstyle N+1/2}$};
\end{tikzpicture}
\end{center}
\caption{Integration contour for eq.\eqref{inverse.mellin}.\label{fig.2}}
\end{figure}

Since the integral along the arc $C_\epsilon$ vanishes for $\epsilon \to 0$, 
we only need to analyze the integrals along $C_N^-$ and $C_N^+$. We can 
write $s \in C_N^\pm$ as $s = R\operatorname{e}^{i\theta}$ with 
$\epsilon \leq R \leq (N+1/2)$ and $\theta = \pi/2$ for $s \in C_N^+$ and
$\theta = -\pi/2$ for $s \in C_N^-$. Let us analyze 
\begin{equation}
\label{log.integrand}
\log|J(t,s)| =  \log\left|(\kappa t)^{-s} 
\frac{\Gamma(s)G(2+\nu-\alpha-s)}{G(1+\nu-s)} \right| . 
\end{equation}
We have  
$$
\log|(\kappa t)^{-s}| = -R\cos\theta \log|\kappa t| + 
R\sin\theta \operatorname{arg}(\kappa t) , 
$$
and from Stirling formula, 
$$
\log|\Gamma(s)| = \left(R\cos\theta -\frac{1}{2}\right) \log{R} - 
R(\theta\sin\theta + \cos\theta) + \mathcal{O}(1) . 
$$
From the asymptotic formula for $\log G(s)$ given in \cite{Ferreira},
we have 
\begin{equation*}
\begin{aligned} 
& \log|G(1+a-s)| = -\frac{3}{4} R^2\cos{2\theta} + a R \cos\theta - 
\frac{\log{2\pi}}{2} R\cos\theta + \frac{1}{2}R^2 \log{R} \cos{2\theta}\\[1ex]
& \phantom{\log|G(1+a-s)| =} -\frac{1}{2}R^2 \theta \sin{2\theta} + 
\frac{\pi}{2} R^2 \sin{2|\theta|} - a R \log{R}\cos\theta + a R \theta\sin\theta\\[1ex]
& \phantom{\log|G(1+a-s)| =} -a\pi R \sin|\theta| + \left(\frac{a^2}{2}-\frac{1}{12}\right) \log{R} + \mathcal{O}(1) .
\end{aligned}
\end{equation*}
Collecting these above expressions in eq.\eqref{log.integrand} we obtain 
\begin{equation}
\label{exp.log.integrand}
\begin{aligned}
\log\left|(\kappa t)^{-s} \frac{\Gamma(s) G(2-s-\alpha)}{  G(1-s)}\right|  = & 
\alpha\cos\theta \, R\log{R} + A R \\[1ex] 
& + \frac{(\alpha+\nu)(\alpha+\nu-2)}{2}\log{R} + \mathcal{O}(1) ,
\end{aligned}
\end{equation}
where 
$$
A = -\cos\theta \log|\kappa t| + \sin\theta \operatorname{arg}(\kappa t) 
-\alpha(\theta\sin\theta + \cos\theta) -(1-\alpha)\pi \sin|\theta| .
$$
For $s \in C_N^\pm$ (i.e., $\theta = \pm \pi/2$) the leading 
term in eq.\eqref{exp.log.integrand} is  
$$
AR =  
\left[\pm \operatorname{arg}(\kappa t) + \left(\frac{\alpha}{2}-1\right)\pi\right] R 
$$
where the $\pm$ signs corresponds to $\theta = \pm \pi/2$. 
Thus, the integral will converges for $R \to \infty $ if 
$$
\pm \operatorname{arg}(\kappa t) + \left(\frac{\alpha}{2}-1\right)\pi < 0 ,
$$
that is, 
$$
|\operatorname{arg}(\kappa x)| < \left(1-\alpha/2\right)\pi . 
$$
Since $0 < \alpha < 1$, this means that the integral will converges 
if $\operatorname{Re}(\kappa t) > 0$. 

Now let us consider the closed contour $C_N^- \, \cup \,  C_\epsilon  \, \cup \,  C_N^+ \, \cup \,  C_N$, where $C_N$ is the arc of radius $N + 1/2$ on the left half-plane illustrated in 
Figure~\ref{fig.2}. For $s \in C_N$ we have $\cos\theta < 0$ and for 
the leading term in eq.\eqref{exp.log.integrand} we have  
$$
\alpha \cos\theta R \log{R} < 0 , 
$$
which implies that  
$$
\lim_{N\to \infty} \int_{C_N} (\kappa t)^{-s} 
\frac{\Gamma(s)G(2+\nu-\alpha-s)}{G(1+\nu-s)} ds = 0 . 
$$
Thus, from the residue theorem, 
\begin{equation}
\begin{aligned}
f(t) & = f_0 \sum_{n=0}^\infty \underset{s=-n}{\operatorname{Res}} \left[ (\kappa t)^{-s} \frac{\Gamma(s) G(2+\nu-\alpha-s)}{  G(1+\nu-s)} \right] \\[1ex]
& = f_0 \sum_{n=0}^\infty   \frac{G(2+n-\alpha+\nu)}{G(1+n+\nu)} \frac{(-\kappa t)^n}{n!} 
\end{aligned}
\end{equation}
Formula \eqref{eq.solution} is obtained by choosing $f_0$ in such a way that the initial condition $f(0) = 1$ is satisfied,  that is, 
\begin{equation}
\label{def.f_0}
f_0 = \frac{G(1+\nu)}{G(2-\alpha+\nu)}. 
\end{equation}

\end{proof}

\subsection{The $\boldsymbol{G}$-hypergeometric functions}

The function $\Psi_\alpha^\nu(-\kappa t)$ in eq.\eqref{eq.solution} can be
written in a more convenient way after defining the $G$-Pochhammer symbol. 

\begin{definition}
Let $G(\cdot)$ be the Barnes $G$-function (see Appendix~\ref{appendix.Barnes}). 
We define the $G$-Pochhammer symbol as 
\begin{equation}
\label{poch-like}
\Lbrack a \Rbrack_n = \frac{G(a + n)}{G(a)} ,  
\end{equation}
where $a \in \mathbb{C}$.
\end{definition}

Note that the ordinary Pochhammer symbol $(a)_n$ can be expressed as a particular case of the $G$-Pochhammer symbol as 
\begin{equation}
\label{poch.2.G.poch}
    (a)_n = \frac{\Lbrack a+1 \Rbrack_n}{\Lbrack a \Rbrack_n} .
\end{equation}

The next step is to define the $G$-hypergeometric functions of type $(p,q)$. 

\begin{definition} 
Let $p,q \in \mathbb{N}$. 
The $G$-hypergeometric function of the type $(p,q)$ is defined as 
\begin{equation} 
\sideset{_{p}^{}}{_{q}^{}}{\operatorname{\mathfrak{F}}}(a_1,\ldots,a_p;b_1,\ldots,b_q;z) = 
\sum_{n=0}^\infty \frac{\Lbrack a_1\Rbrack_n \cdots \Lbrack a_p\Rbrack_n}{ \Lbrack b_1\Rbrack_n \cdots \Lbrack b_q \Rbrack_n} \frac{z^n}{n!} , 
\end{equation}
where $\Lbrack \cdot \Rbrack_n$ is the $G$-Pochhammer symbol defined in eq.\eqref{poch-like}. 
\end{definition}

Using eq.\eqref{poch.2.G.poch} we can write the ordinary hypergeometric function of type $(p,q)$ in terms of the $G$-hypergeometric functions as 
\begin{equation}
\begin{split}
& \sideset{_p^{}}{_q^{}}{\operatorname{\mathit{F}}}(a_1,\ldots,a_p;b_1,\ldots,b_q;z) \\
& \qquad \qquad = 
\sideset{_{p+q}^{}}{_{p+q}^{}}{\operatorname{\mathfrak{F}}}(a_1+1,\ldots,a_p+1,b_1,\ldots,b_q;a_1,\ldots,a_q,b_1+1,\ldots,b_q+1;z) . 
\end{split}
\end{equation}
We also note that $G$-hypergeometric functions can be seen as particular cases
of Fox-Barnes $J$-function \cite{Vaz-Fox-Barnes}. 

With these definitions in place, we can write $\Psi_\alpha^\nu(-\kappa t)$ as 
a particular example of a $G$-hypergeometric function, namely 
\begin{equation}
    \label{eq.solution.final}
    \Psi_\alpha^\nu(-\kappa t) =  
\sideset{_1^{}}{_1^{}}{\operatorname{\mathfrak{F}}}(2+\nu-\alpha;1+\nu;-\kappa t) .
\end{equation}

\begin{lem} 
Let $\sideset{_1^{}}{_1^{}}{\operatorname{\mathfrak{F}}}(a;b;z)$ be the
$G$-hypergeometric function of type $(1,1)$, 
\begin{equation}
\label{general.chg}
 \sideset{_1^{}}{_1^{}}{\operatorname{\mathfrak{F}}}(a;b;z) = 
\sum_{n=0}^\infty \frac{\Lbrack a \Rbrack_n}{\Lbrack b \Rbrack_n} \frac{z^n}{n!} . 
\end{equation} 
If $a-1 < b$ this series converges for all $z \in \mathbb{R}$. 
\end{lem}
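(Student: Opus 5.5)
We apply the ratio test to the terms
\[
c_n(z) = \frac{\Lbrack a \Rbrack_n}{\Lbrack b \Rbrack_n} \frac{z^n}{n!} = \frac{G(b)}{G(a)}\,\frac{G(a+n)}{G(b+n)}\,\frac{z^n}{n!} .
\]
We assume $a,b$ avoid the non-positive integers, so that $G(a),G(b)\neq 0$ and the symbols are well defined and nonzero for large $n$.

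The key tool is the functional equation of the Barnes $G$-function, $G(z+1)=\Gamma(z)G(z)$ (Appendix~\ref{appendix.Barnes}). It gives
\[
\frac{c_{n+1}(z)}{c_n(z)} = \frac{G(a+n+1)}{G(a+n)}\,\frac{G(b+n)}{G(b+n+1)}\,\frac{z}{n+1}
= \frac{\Gamma(a+n)}{\Gamma(b+n)}\,\frac{z}{n+1}.
\]
This reduces the problem to the asymptotics of a ratio of Gamma functions.

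Next we use the standard Stirling consequence $\Gamma(n+a)/\Gamma(n+b)\sim n^{a-b}$ as $n\to\infty$. Hence
\[
\left|\frac{c_{n+1}(z)}{c_n(z)}\right| \sim |z|\, n^{a-b-1}.
\]
Under the hypothesis $a-1<b$ the exponent $a-b-1$ is negative. The ratio therefore tends to $0$ for every fixed $z$, and the ratio test gives absolute convergence for all $z\in\mathbb{R}$, in fact for all $z\in\mathbb{C}$. So the function is entire whenever $a-b<1$.

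There is essentially no obstacle here. The only points needing care are that the $G$-Pochhammer symbols are nonzero for all large $n$, and the choice of test. For the first, $G$ vanishes only at non-positive integers, so for $n$ large $a+n$ and $b+n$ are away from the zeros. For the second, we could instead compare directly with $\Gamma$ via $G(a+n)/G(b+n)\approx \prod\Gamma$, or use the asymptotic of $\log G$ quoted from \cite{Ferreira}, but the ratio test avoids this.

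For completeness, the boundary case $a-b=1$ yields radius of convergence $1$, and $a-b>1$ yields divergence for $z\neq 0$. This shows the condition is sharp, although the lemma does not require it.

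Applied to Theorem~\ref{main}, with $a=2+\nu-\alpha$ and $b=1+\nu$ we have $a-b=1-\alpha<1$. So $\Psi_\alpha^\nu$ is entire, consistent with the residue series obtained there.
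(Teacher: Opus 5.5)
Your proof is correct and is essentially the paper's argument: you apply the ratio test, use $G(z+1)=\Gamma(z)G(z)$ to reduce the ratio of consecutive terms to $\frac{z}{n+1}\,\frac{\Gamma(a+n)}{\Gamma(b+n)}$, and then show this Gamma quotient decays. The one difference is that you use the Stirling asymptotic $\Gamma(n+a)/\Gamma(n+b)\sim n^{a-b}$ where the paper uses Gautschi's inequality, which as quoted requires $\sigma=a-b\in(0,1)$ and so does not literally cover $a-b\le 0$, a case your argument handles directly.
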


\begin{proof}
The converge of the series in eq.\eqref{general.chg} can be analyzed
using the ratio test. We have
\begin{equation}
\begin{aligned}
L   = & \lim_{n\to \infty}\left|\frac{z}{n+1}  \frac{G(a+n+1) G(b+n)}{G(a+n) G(b+n+1)}\right| = |z| \lim_{n\to \infty}\left|\frac{1}{n+1}  \frac{\Gamma(a+n)}{\Gamma(b+n)}\right| \\[1ex]
= & |z| \lim_{n\to \infty}\left|\frac{a-1+n}{n+1}\right| \, \left| \frac{\Gamma(a-1+n)}{\Gamma(b+n)}\right| = |z| \lim_{n\to \infty} \left| \frac{\Gamma(a-1+n)}{\Gamma(b+n)}\right|
\end{aligned}
\end{equation}
Now we recall Gautschi's inequality \cite{Gautschi}: 
for $x > 0$ and $\sigma\in (0,1)$, it holds 
$$
x^{1-\sigma} < \frac{\Gamma(x+1)}{\Gamma(x+\sigma)} < (x+1)^{1-\sigma} ,
$$
or 
$$
\frac{1}{(x+1)^{1-\sigma}} < \frac{\Gamma(x+\sigma)}{\Gamma(x+1)} < \frac{1}{x^{1-\sigma} } .
$$
Thus, provided $\sigma = a - b <1$, we can write, using Gautschi's inequality, that 
$$
 \frac{\Gamma(a-1+n)}{\Gamma(b+n)} = 
 \frac{\Gamma((b+n-1)+(a-b))}{\Gamma((b+n-1)+1)} < \frac{1}{(b+n-1)^{(1-(a-b))}}
$$
and then 
$$
 \lim_{n\to \infty} \left| \frac{\Gamma(a-1+n)}{\Gamma(b+n)}\right| = 0 \quad \text{for}\;  
 a - 1 < b  . 
$$
Thus, if $a - 1 < b$, the series in eq.\eqref{general.chg} converges for 
all $z \in \mathbb{R}$. 
\end{proof}

\bigskip

For the function $\sideset{_1^{}}{_1^{}}{\operatorname{\mathfrak{F}}}(2+\nu-\alpha;1+\nu;-\kappa t)$ we have $a=2+\nu-\alpha$ and $b=1+\nu$, and since $\alpha > 0$, we have 
$$
a - 1 = 1 - \alpha + \nu < 1 + \nu = b .
$$
Thus we have: 

\begin{coro}
The function $\Psi_\alpha^\nu(-\kappa t) =  
\sideset{_1^{}}{_1^{}}{\operatorname{\mathfrak{F}}}(2+\nu-\alpha;1+\nu;-\kappa t)$ defined by the series 
$$
\sideset{_1^{}}{_1^{}}{\operatorname{\mathfrak{F}}}(2+\nu-\alpha;1+\nu;-\kappa t) = \sum_{n=0}^\infty \frac{\Lbrack 2+\nu-\alpha\Rbrack_n}{\Lbrack 1+\nu\Rbrack_n} \frac{(-\kappa t)^n}{n!} 
$$
converges for all $t \in \mathbb{R}$. 
\end{coro}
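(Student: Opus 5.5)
The corollary follows directly from the preceding lemma on the convergence of $\sideset{_1^{}}{_1^{}}{\operatorname{\mathfrak{F}}}(a;b;z)$. I will therefore identify the parameters and check its hypothesis. Comparing the series defining $\Psi_\alpha^\nu(-\kappa t)$ in eq.~\eqref{eq.solution} with eq.~\eqref{general.chg}, the $G$-Pochhammer definition \eqref{poch-like} gives
\[
\frac{\Lbrack 2+\nu-\alpha\Rbrack_n}{\Lbrack 1+\nu\Rbrack_n}
=\frac{G(1+\nu)}{G(2-\alpha+\nu)}\,\frac{G(2+n-\alpha+\nu)}{G(1+n+\nu)} .
\]
So $\Psi_\alpha^\nu(-\kappa t)$ is exactly $\sideset{_1^{}}{_1^{}}{\operatorname{\mathfrak{F}}}(a;b;z)$ with $a=2+\nu-\alpha$, $b=1+\nu$ and $z=-\kappa t$. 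This is the identity \eqref{eq.solution.final}, and I will cite it.

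Next I verify the hypothesis $a-1<b$. Here $a-1=1+\nu-\alpha$, and $\alpha>0$ gives $1+\nu-\alpha<1+\nu=b$. For $\kappa$ real, the variable $z=-\kappa t$ is real whenever $t\in\mathbb{R}$. The lemma then yields convergence of the series for every real $t$.

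The only step needing care is that the terms are well defined. The lemma's ratio test uses $G(b+n)\neq 0$ for all $n\ge 0$, which holds because $b=1+\nu>0$ and the zeros of $G$ lie at the non-positive integers. Also $a=2+\nu-\alpha>1-\alpha>0$, so $G(a)\neq 0$ and the prefactor $G(1+\nu)/G(2-\alpha+\nu)$ is finite and nonzero. As a check on the lemma's limit, the ratio of consecutive terms behaves like $|z|\,\Gamma(n+1-\alpha+\nu)/\Gamma(n+1+\nu)\sim |z|\,n^{-\alpha}\to 0$. This confirms that the radius of convergence is infinite. The same argument gives absolute convergence for complex $t$ as well.
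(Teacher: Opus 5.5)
Your proposal is correct and takes the same approach as the paper. You identify $\Psi_\alpha^\nu(-\kappa t)$ with the $G$-hypergeometric series for $a=2+\nu-\alpha$, $b=1+\nu$, $z=-\kappa t$, check $a-1=1+\nu-\alpha<1+\nu=b$ from $\alpha>0$, and invoke the preceding lemma. Your extra checks are accurate details the paper leaves implicit: $G(1+\nu)$, $G(2+\nu-\alpha)$ and $G(b+n)$ do not vanish because $\nu>-1$, and the term ratio behaves like $|z|\,n^{-\alpha}\to 0$.
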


\subsection{Complete monotonicity of the solution} 

We now prove a key property of the solution to the first-order differential equation, which is represented by its complete monotonicity.
\begin{theorem}\label{second}
For every $\alpha \in (0,1)$ and every $\nu > -1,$ the function $f(t) = \Psi_\alpha^\nu(-\kappa t)$, given by  eq.\eqref{eq.solution} or by eq.\eqref{inverse.mellin}, is completely monotone. 

\end{theorem}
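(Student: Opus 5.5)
The plan is to show that $f$ is the Laplace transform of a positive random variable, and then conclude by Bernstein's theorem. By eq.\eqref{mel} and the choice \eqref{def.f_0} of $f_0$, the Mellin transform of $f$ on the strip $0<\operatorname{Re}s<1+\nu$ is
$$
F(s)=\kappa^{-s}\Gamma(s)\,\phi(-s),\qquad
\phi(u):=\frac{G(1+\nu)}{G(2+\nu-\alpha)}\,\frac{G(2+\nu-\alpha+u)}{G(1+\nu+u)} .
$$
If $X>0$ is a random variable with $\mathbb{E}[X^{u}]=\phi(u)$ for $-(1+\nu)<\operatorname{Re}u<0$, then Fubini gives
$$
\mathcal{M}\bigl[\mathbb{E}[\operatorname{e}^{-\kappa tX}];s\bigr]=\Gamma(s)\kappa^{-s}\,\mathbb{E}[X^{-s}]=F(s).
$$
Uniqueness of the Mellin transform on a vertical line inside the strip (the inversion \eqref{inverse.mellin} is taken on such a line) then yields $f(t)=\mathbb{E}[\operatorname{e}^{-\kappa tX}]$. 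This is completely monotone in $t$ for $\kappa>0$. So everything reduces to showing that $\phi$ is a Mellin moment function of a positive random variable.

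To build $X$ I will show that $u\mapsto\log\phi(u)$ is the cumulant function of an infinitely divisible random variable $L=\log X$, by putting it in Lévy--Khintchine form.

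\textbf{Step 1.} Differentiate $\log G$ using $G(z+1)=\Gamma(z)G(z)$ and the classical formula
$$
\frac{d}{dz}\log G(z)=\tfrac12\log(2\pi)-z+\tfrac12+(z-1)\psi(z).
$$
This gives
$$
\frac{d^2}{du^2}\log\phi(u)=\bigl[\psi(z)+(z-1)\psi'(z)\bigr]_{z=1+\nu+u}^{z=2+\nu-\alpha+u}.
$$

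\textbf{Step 2.} Insert the integral representations
$$
\psi(z)=\int_0^\infty\Bigl(\frac{\operatorname{e}^{-x}}{x}-\frac{\operatorname{e}^{-zx}}{1-\operatorname{e}^{-x}}\Bigr)dx,
\qquad
\psi'(z)=\int_0^\infty\frac{x\operatorname{e}^{-zx}}{1-\operatorname{e}^{-x}}\,dx .
$$
The aim is to write the second derivative as $\sigma^2+\int_0^\infty x^2\operatorname{e}^{ux}\Pi(dx)$, or with $\operatorname{e}^{-ux}$ depending on the sign convention, for a nonnegative Lévy measure $\Pi$. Integrating twice in $u$ then gives
$$
\log\phi(u)=mu+\tfrac{\sigma^2}{2}u^2+\int\bigl(\operatorname{e}^{ux}-1-ux\bigr)\Pi(dx),
$$
normalised by $\phi(0)=1$, and this defines $L$.

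Two consistency checks support this route. First, $\phi(u+1)/\phi(u)=\Gamma(1+\nu+u+(1-\alpha))/\Gamma(1+\nu+u)$ is already log-infinitely divisible, with a positive density $\operatorname{e}^{-(1+\nu+u)x}(1-\operatorname{e}^{-(1-\alpha)x})/(x(1-\operatorname{e}^{-x}))$. Second, Stirling for $G$ gives $\log\phi(u)\sim(1-\alpha)u\log u$, which is compatible with a Lévy measure concentrated on one half-line and no Gaussian part.

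\textbf{Main obstacle.} The positivity of the resulting density of $\Pi$ is where the difficulty lies. Concretely, one needs
$$
\frac{\operatorname{e}^{-(1+\nu)x}}{1-\operatorname{e}^{-x}}\Bigl[(1-\operatorname{e}^{-(1-\alpha)x})\cdot(\text{a factor from }\psi)+x\bigl((1+\nu-\alpha)\operatorname{e}^{-(1-\alpha)x}-\nu\bigr)\Bigr]
$$
to be of a fixed sign after combining the $\psi$ and $(z-1)\psi'$ contributions. The $(z-1)\psi'$ term can be rewritten through an integration by parts in $x$, using $\partial_x\operatorname{e}^{-zx}=-z\operatorname{e}^{-zx}$, so that the kernel becomes
$$
\frac{d}{dx}\Bigl[\frac{x(\operatorname{e}^{-x}-\operatorname{e}^{-(2-\alpha)x})}{1-\operatorname{e}^{-x}}\Bigr]\operatorname{e}^{-(\nu+u)x}
$$
up to sign. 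If that sign cannot be settled directly, my fallback is the infinite-product (Weierstrass) representation of $G$. It writes $\phi$ as a limit of finite products of ratios $\Gamma(c_k+u)/\Gamma(c_k+1-\alpha+u)$ times explicit exponential factors. Each factor is the Mellin moment of a beta or gamma type variable, and positivity would then pass to the limit by Lévy's continuity theorem, as in the known constructions of Barnes-$G$ random variables. Once $X$ exists, Bernstein's theorem concludes the proof of Theorem~\ref{second}.
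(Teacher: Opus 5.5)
Your reduction is correct, and it is the same one the paper uses. Everything hinges on producing a positive random variable $X$ with $\mathbb{E}[X^{u}]=\phi(u)$ for $u>-(1+\nu)$; after that, $f(t)=\mathbb{E}[\operatorname{e}^{-\kappa tX}]$ is completely monotone. But that existence statement is the whole content of the theorem, and your proposal does not prove it.

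You explicitly leave the positivity of the L\'evy density open, and neither kernel you write down would settle it:
\begin{itemize}
\item The bracket in your ``main obstacle'' has $u$ frozen at $0$, but the true integrand still carries the factor $z-1=\nu+u$. It is therefore not of the form $\int x^{2}\operatorname{e}^{-ux}\Pi(dx)$, so its sign is beside the point. In any case it does change sign when $\nu>0$, since the term $-\nu x$ wins as $x\to\infty$.
\item Your post-integration-by-parts kernel moves $\operatorname{e}^{-\nu x}$ outside the $x$-derivative. It also drops the remaining terms it has to be combined with.
\end{itemize}
Your first ``consistency check'' has the wrong sign. The function $u\mapsto\Gamma(2+\nu-\alpha+u)/\Gamma(1+\nu+u)$ is log-concave, because $\psi'(2+\nu-\alpha+u)-\psi'(1+\nu+u)<0$. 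So it is the reciprocal of a beta Mellin transform, the density you quote enters its logarithm with a minus sign, and it is not a log-infinitely divisible moment function. The Weierstrass-product fallback is only named, not carried out.

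The route does close, and cleanly. Take $\psi(z)+(z-1)\psi'(z)$ between $z=1+\nu+u$ and $z=2+\nu-\alpha+u$. Integrate by parts only the piece proportional to $u\,x\operatorname{e}^{-ux}$, using $u\operatorname{e}^{-ux}=-\partial_x\operatorname{e}^{-ux}$; the boundary terms vanish. When you add everything, the terms without a factor $x$ cancel and you get
\[
\frac{d^{2}}{du^{2}}\log\phi(u)=\int_0^\infty \operatorname{e}^{-ux}\,\frac{x\,\operatorname{e}^{-(1+\nu)x}\bigl(1-\operatorname{e}^{-(1-\alpha)x}\bigr)}{(1-\operatorname{e}^{-x})^{2}}\,dx ,\qquad u>-(1+\nu),
\]
whose kernel is manifestly positive.

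Hence $\phi(u)=\mathbb{E}[\operatorname{e}^{-uY}]$ for an infinitely divisible $Y$ with no Gaussian part, and $X=\operatorname{e}^{-Y}$ works. The L\'evy density of $Y$ on $(0,\infty)$ is $\operatorname{e}^{-(1+\nu)x}(1-\operatorname{e}^{-(1-\alpha)x})/\bigl(x(1-\operatorname{e}^{-x})^{2}\bigr)$. It behaves like $(1-\alpha)x^{-2}$ at $0$ and like $\operatorname{e}^{-(1+\nu)x}/x$ at $\infty$. So the compensated L\'evy--Khintchine exponent is finite exactly for $u>-(1+\nu)$, and it equals $\log\phi(u)$ once the drift is fixed by $(\log\phi)'(0)$.

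With this computation inserted, your argument becomes a self-contained alternative to the paper's. The paper instead imports existence from Letemplier--Simon: $X_\alpha=\int_0^\infty(1+\sigma^{(\alpha)}_u)^{-(1+\alpha)}du$ has exactly these moments when $\nu=\alpha$. Size-biasing by $\nu-\alpha$, which is allowed since $\nu>-1$, then gives the general case. The paper's route yields a pathwise representation of $X$ through a stable subordinator; yours additionally yields the infinite divisibility of $\log X$.
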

\begin{proof}
Let $\{\sigma^{(\alpha)}_u, \, u\ge 0\}$ stand for the standard $\alpha-$stable subordinator, i.e. such that 
\begin{equation}{\mathbb E} [e^{-\lambda \sigma^{(\alpha)}_u}] \, = \,e^{-u \lambda^\alpha} ,\qquad u, \lambda \geq 0.\label{subord}\end{equation}
Let us define the random variable
$$X_\alpha \, =\, \int_0^\infty \left(\frac{1}{1 + \sigma_u^{(\alpha)}}\right)^{1+\alpha} du.$$ 
Then, it follows from Proposition 2.4 and Corollary 1.5 in \cite{HSF} that
$${\mathbb E} (X_\alpha^s) \, =\, \frac{G(2+s) G(1+\alpha)}{G(1+\alpha +s) G(2)}$$
for every $s > -(1+\alpha).$ Therefore, we have
\begin{eqnarray*}
{\mathbb E} (e^{-t (\kappa X_\alpha)}) & = & \sum_{n\ge 0} \frac{(-\kappa t)^n {\mathbb E}(X_\alpha^n)}{n!} \\
& = &
\frac{G(1+\alpha)}{G(2)}\sum_{n=0}^\infty   \frac{G(2+n)}{G(1+n+\alpha)} \frac{(-\kappa t)^n}{n!}\, =\, \Psi_\alpha^\alpha(-\kappa t)
\end{eqnarray*}
and this shows the result for every $\alpha \in (0,1)$ and $\nu = \alpha.$ Suppose next $\alpha\neq\nu > -1$ and set $X_{\alpha, \nu} = X_\alpha^{(\nu-\alpha)},$ with the usual notation for the size-bias $X^{(t)}$ of a positive random variable $X$, that is
$${\mathbb E}[f(X^{(t)})] = \frac{{\mathbb E}[X^t f(X)]}{{\mathbb E} [X^t]}$$
for every $f$ bounded continuous and every $t\in {\mathbb R}$ such that $ {\mathbb E} (X^t) < \infty.$ Then, with the notations of \cite{HSF}, we have
$$X_{\alpha, \nu} \, \sim\, \frac{1}{\Gamma(1+\alpha)} \,T (1+\alpha, 1, 1-\alpha)^{(\nu -\alpha)}\, \sim\, \frac{\Gamma(\nu-\alpha +2)}{\Gamma(1+\nu)} \, T (1+\nu, 1, 1-\alpha).$$ 
 Applying again Proposition 2.4 in \cite{HSF} leads to
\begin{equation}\label{mom}{\mathbb E} (X_{\alpha,\nu}^s) \, =\, \frac{G(2+s+\nu -\alpha) G(1+\nu)}{G(1+\nu +s) G(2+\nu-\alpha)}\end{equation}
for every $s > -(1 +\nu),$ and then to
\begin{equation}\label{LTpsi}
{\mathbb E} e^{-t (\kappa X_{\alpha,\nu})}  =\frac{G(1+\nu)}{G(2-\alpha+\nu)}\sum_{n=0}^\infty   \frac{G(2+n-\alpha+\nu)}{G(1+n+\nu)} \frac{(-\kappa t)^n}{n!}= \Psi_\alpha^\nu(-\kappa t), 
\end{equation}
which shows the required property.
\end{proof}

\begin{remark} In the case $\nu =\alpha,$ we have the stochastic representation
$$X_{\alpha, \alpha} \, =\, \int_0^\infty \left(\frac{1}{1 + \sigma_u^{(\alpha)}}\right)^{1+\alpha} du$$ 
as the Riemannian integral of an $\alpha-$stable subordinator. It is worth mentioning that there is a stochastic representation with another stable process in the reguralized Caputo case $\nu = 0.$ If we consider the spectrally positive $(2-\alpha)-$stable process $\{L_u^{(\alpha)}, \, u\ge 0\}$ starting from one, with normalization
$${\mathbb E} [e^{-\lambda L_u^{(\alpha)}}] \, = \,e^{u \lambda^{2-\alpha} - \lambda} ,\qquad u, \lambda \geq 0,$$
it follows namely from \eqref{mom} and a combination of Theorem 1.2 (a) and Proposition 2.4. in \cite{HSF} that 
$$X_{\alpha, 0} \, \sim\, \left(\int_0^T \Big(L_u^{(\alpha)}\Big)^{\alpha -1} du\right)^{-1}$$
with the notation $T = \inf\{ u \ge 0, L_u = 0\}.$  
\end{remark}

\subsection{Asymptotic expansion of the solution} 

In this section we want to show the following. 

\begin{theorem}
Let $\alpha \in (0,1)$, $\nu \in \mathbb{R}$ and $m \in \mathbb{N}$. Then, the function $\Psi_\alpha^\nu(-\kappa t)$ admits the asymptotic expansion
\begin{equation}
\label{asymptotic.psi}
\Psi_\alpha^\nu(-\kappa t) =  -\frac{G(1+\nu)}{G(2+\nu-\alpha)}
\sum_{k=1}^m \rho_k  
(\kappa t)^{-(\nu+k)} 
 + \mathcal{O}\left(|\kappa t|^{-(\nu+m+1/2)}\right),
\end{equation}
where 
\begin{equation}
\label{residue.1}
\rho_k = \underset{\varepsilon=0}{\operatorname{Res}} \left[\frac{(\kappa t)^{-\varepsilon} \Gamma(\nu+k+\varepsilon)G(2-\alpha-k-\varepsilon)}{  G(1-k-\varepsilon)} \right], \qquad (k = 1,2,\ldots,m).
\end{equation}
\end{theorem}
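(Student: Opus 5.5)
The plan is to start from the Mellin--Barnes representation \eqref{inverse.mellin}, with $f_0$ given by \eqref{def.f_0}. The integral is taken along a vertical line $\operatorname{Re}s=c$ with $0<c<1+\nu$. This time, instead of closing the contour to the left as in the proof of Theorem~\ref{main}, I will shift it to the right, across the poles of $1/G(1+\nu-s)$.

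These poles sit at $s_k=\nu+k$, $k=1,2,\ldots$; writing $s_k=m'+1+\nu$ with $m'=k-1$ matches the description in that proof. Since $1/G$ has zeros of order $m'+1$ at $-m'$, the poles have growing order and are not simple in general. Their residues are therefore defined through the substitution $s=\nu+k+\varepsilon$. Under this change of variable,
\[
(\kappa t)^{-s}\Gamma(s)\frac{G(2+\nu-\alpha-s)}{G(1+\nu-s)}=(\kappa t)^{-(\nu+k)}\,(\kappa t)^{-\varepsilon}\,\frac{\Gamma(\nu+k+\varepsilon)\,G(2-\alpha-k-\varepsilon)}{G(1-k-\varepsilon)},
\]
and the residue at $\varepsilon=0$ is exactly $\rho_k$ in \eqref{residue.1}, multiplied by $(\kappa t)^{-(\nu+k)}$. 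Note that $\rho_k$ still depends on $\log(\kappa t)$ polynomially. The extension from $\nu>-1$ to $\nu\in\mathbb{R}$ is handled by analytic continuation in $\nu$, or by an indented contour separating the two pole families.

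The contour-shifting step goes as follows. Fix $m$ and move the line to $\operatorname{Re}s=\nu+m+\tfrac12$, which lies strictly between $s_m$ and $s_{m+1}$. Apply the residue theorem on the rectangle with vertices $c\pm iT$ and $\nu+m+\tfrac12\pm iT$. Crossing poles to the right in a counterclockwise orientation produces the minus sign, giving
\[
f(t)=-f_0\sum_{k=1}^m\rho_k(\kappa t)^{-(\nu+k)}+\frac{f_0}{2\pi i}\int_{\nu+m+1/2-i\infty}^{\nu+m+1/2+i\infty}J(t,s)\,ds .
\]
On the new line, $|(\kappa t)^{-s}|=|\kappa t|^{-(\nu+m+1/2)}e^{\operatorname{Im}s\,\arg(\kappa t)}$. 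The remainder is therefore bounded by $|\kappa t|^{-(\nu+m+1/2)}$ times $\int|\Gamma(s)G(2+\nu-\alpha-s)/G(1+\nu-s)|\,e^{|\operatorname{Im}s||\arg\kappa t|}\,|ds|$, which is a constant independent of $t$ provided this integral converges.

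The main obstacle is controlling the integrand at large $|\operatorname{Im} s|$, both on the horizontal segments $\operatorname{Im}s=\pm T$ (so that they vanish as $T\to\infty$) and on the shifted vertical line. For this I would reuse the estimate \eqref{exp.log.integrand}, which comes from Stirling's formula and the Ferreira asymptotics for $\log G$, now uniformly for $\operatorname{Re}s$ in a bounded strip. There the leading behaviour is
\[
\log|J|=\Bigl[\pm\arg(\kappa t)+\bigl(\tfrac{\alpha}{2}-1\bigr)\pi\Bigr]|\operatorname{Im}s|+O(\log|\operatorname{Im}s|),
\]
with the $O(\cdot)$ term bounded in a bounded strip. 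This gives exponential decay for $|\arg(\kappa t)|<(1-\alpha/2)\pi$, in particular for $\kappa t>0$. Hence the horizontal contributions vanish and the remainder integral converges absolutely, which yields the $\mathcal{O}(|\kappa t|^{-(\nu+m+1/2)})$ bound uniformly in such sectors.
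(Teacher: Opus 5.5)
Your proposal is correct and follows the paper's route. You shift the Mellin--Barnes contour to $\operatorname{Re}s=\nu+m+\tfrac12$ across the order-$k$ poles at $s=\nu+k$, which yields $-f_0\sum_{k\le m}\rho_k(\kappa t)^{-(\nu+k)}$; you dispose of the horizontal sides with the Stirling/Ferreira--L\'opez estimate, which gives decay like $e^{[\pm\arg(\kappa t)-(1-\alpha/2)\pi]|\operatorname{Im}s|}$; and you bound the remaining vertical integral by $C\,|\kappa t|^{-(\nu+m+1/2)}$. The only difference is that you bound that vertical integral with the same strip asymptotics, whereas the paper's appendix on $C^\prime$ uses explicit product formulas for $|\Gamma(x+iy)|$ and $|G(x+iy)|$; your version is cleaner, and it makes the uniformity in sectors $|\arg(\kappa t)|<(1-\alpha/2)\pi$ explicit.
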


\begin{proof}
We will look for an asymptotic expansion for $\Psi_\alpha^\nu(-\kappa t)$. 
To this aim, we cannot use an arc on the right half-plane (as we did 
on the left-plane illustrated in Figure~\ref{fig.2}), because for this 
arc, we would have $\cos\theta > 0$ (since $|\theta| < \pi/2$). As a consequence, 
the leading term in the expansion 
in eq.\eqref{exp.log.integrand} would be    
$$
\alpha \cos\theta R \log{R} > 0 ,
$$
and thus the integral along this arc would diverge for $R \to \infty$. 
As an alternative, we will consider the closed contour $C = C_0 \, \cup \, C_\uparrow \, \cup \, C^\prime \, \cup C_\downarrow$ illustrated in Figure~\ref{fig.3}. 

The integral along $C_0$ for $\tau \to \infty$ defines the 
function $\Psi_\alpha^\nu(-\kappa t)$. Thus, using the residue theorem, 
we have 
\begin{equation}
\begin{aligned}
\Psi_\alpha^\nu(-\kappa t) = & -
2\pi i \sum_{k=1}^m \underset{s = \nu + k}{\operatorname{Res}} \left[\frac{1}{2\pi i}\frac{G(1+\nu)}{G(2+\nu-\alpha)} (\kappa t)^{-s} \frac{\Gamma(s) G(2+\nu-\alpha-s)}{  G(1+\nu-s)} \right]\\[1ex]
& - \frac{1}{2\pi i}\frac{G(1+\nu)}{G(2+\nu-\alpha)}  \lim_{\tau\to \infty} \left(\int_{C_\uparrow} + \int_{C^\prime} + \int_{C_\downarrow}\right)
(\kappa t)^{-s} \frac{\Gamma(s) G(2+\nu-\alpha-s)}{  G(1+\nu-s)}\, ds
\end{aligned}
\end{equation}

In the appendices we show that 
$$
\frac{1}{2\pi i}\frac{G(1+\nu)}{G(2+\nu-\alpha)}  \lim_{\tau\to \infty}  \int_{C_\uparrow/C_\downarrow}  
(\kappa t)^{-s} \frac{\Gamma(s) G(2+\nu-\alpha-s)}{  G(1+\nu-s)}\, ds = 0 
$$
and 
$$
|J_{\nu,m}(\alpha)| \leq  \mathcal{I}(N) \Theta(\nu,m,\alpha)  |\kappa t|^{-(\nu+m+1/2)} ,
$$
where 
$$
 J_{\nu,m}(\alpha) =   -\frac{1}{2\pi i}\frac{G(1+\nu)}{G(2+\nu-\alpha)}
\int_{(\nu+m+1/2)-i\infty}^{(\nu+m+1/2)+i\infty}(\kappa t)^{-s}  \Gamma(s) \frac{G(2+\nu-\alpha-s)}{G(1+\nu-s)} \, ds .
$$
Thus we have that 
\begin{equation}
\Psi_\alpha^\nu(-\kappa t) = - \frac{G(1+\nu)}{G(2+\nu-\alpha)}
\sum_{k=1}^m \underset{s = \nu + k}{\operatorname{Res}} \left[ (\kappa t)^{-s} \frac{\Gamma(s) G(2+\nu-\alpha-s)}{  G(1+\nu-s)} \right] + \mathcal{O}\left(|\kappa t|^{-(\nu+m+1/2)}\right) , 
\end{equation}
or, defining $\varepsilon = s-\nu-k$,    
\begin{equation}
\Psi_\alpha^\nu(-\kappa t) =  -\frac{G(1+\nu)}{G(2+\nu-\alpha)}
\sum_{k=1}^m \rho_k  
(\kappa t)^{-(\nu+k)} 
 + \mathcal{O}\left(|\kappa t|^{-(\nu+m+1/2)}\right) , 
\end{equation}
where 
\begin{equation}
\rho_k = \underset{\varepsilon=0}{\operatorname{Res}} \left[\frac{(\kappa t)^{-\varepsilon} \Gamma(\nu+k+\varepsilon)G(2-\alpha-k-\varepsilon)}{  G(1-k-\varepsilon)} \right] \qquad (k = 1,2,\ldots,m) .
\end{equation}
\end{proof}

\begin{figure}[h] 
\begin{center}
\begin{tikzpicture}[scale=1]
\draw[->] (-1.5,0) -- (6,0) node[below right] {${\scriptscriptstyle \operatorname{Re}s}$};
\draw[->] (0,-4.7) -- (0,4.8) node[above right] {${\scriptscriptstyle \operatorname{Im}s}$};
\draw[orange,fill=orange] (0,0) circle (1.2pt);
\draw[orange,fill=orange] (-1,0) circle (1.2pt);
\draw[-] (2,0.05) -- (2,-0.05);;
\draw[-] (3,0.05) -- (3,-0.05);
\draw[-] (5,0.05) -- (5,-0.05);;
\draw[orange,fill=orange] (1.7,0) circle (1.2pt);
\draw[-] (1,0.05) -- (1,-0.05);
\draw[orange,fill=orange] (0.7,0) circle (1.2pt);
\draw[orange,fill=orange] (2.7,0) circle (1.2pt);
\draw[orange,fill=orange] (4.7,0) circle (1.2pt);
\draw[orange,fill=orange] (5.7,0) circle (1.2pt);
\draw[-] (1,0.05) -- (1,-0.05);
\node[below] at (1,0) {${\scriptscriptstyle 1}$};
\node[below] at (2,0) {${\scriptscriptstyle 2}$};
\node[below] at (3,0) {${\scriptscriptstyle 3}$};
\node[below] at (4,0) {${\scriptscriptstyle \cdots}$};
\node[below] at (4.9,0) {${\scriptscriptstyle m}$};
\node[below left] at (0,0) {${\scriptscriptstyle 0}$};
\draw[cyan,very thick] (0,0.3) -- (0,4.5);
\draw[cyan,very thick] (5.2,-4.5) -- (5.2,4.5);
\draw[cyan,very thick] (0,-0.3) -- (0,-4.5);
\draw[cyan,very thick] (0,-4.5) -- (5.2,-4.5);
\draw[cyan,very thick] (0,4.5) -- (5.2,4.5); 
\draw[->,>=triangle 45,cyan,very thick] (0,1) -- (0,1.9);
\draw[->,>=triangle 45,cyan,very thick] (5.2,3) -- (5.2,1.4);
\draw[->,>=triangle 45,cyan,very thick] (1,4.5) -- (2.9,4.5);
\draw[->,>=triangle 45,cyan,very thick] (4.2,-4.5) -- (2.5,-4.5);
\draw[cyan, very thick] (0,-0.3) arc (-90:90:0.3);
\node[left] at (0,2.5) {\footnotesize ${\textcolor{cyan}{C_0}}$};
\node[right] at (5.2,2.5) {\footnotesize ${\textcolor{cyan}{C^\prime}}$};
\node[above] at (2.2,4.5) {\footnotesize $\textcolor{cyan}{C_\uparrow}$};
\node[below] at (2.2,-4.5) {\footnotesize $\textcolor{cyan}{C_\downarrow}$};
\node[above] at (0.7,0) {$\scriptscriptstyle \nu+1$};
\node[above] at (1.7,0) {$\scriptscriptstyle \nu+2$};
\node[above] at (2.7,0) {$\scriptscriptstyle \nu+3$};
\node[above] at (4.7,0) {$\scriptscriptstyle \nu+m$};
\node[above] at (5.7,0) {$\scriptscriptstyle \nu+m+1$};
\draw[->,thin] (5.8,-0.4) -- (5.3,-0.1);
\node[below] at (6,-0.3) {$\scriptscriptstyle \nu+m+1/2$};
\node[left] at (0,-4.5) {$\footnotesize -\tau$};
\node[left] at (0,4.5) {$\footnotesize \tau$};
\end{tikzpicture}
\end{center}
\caption{Integration contour for eq.\eqref{inverse.mellin}.\label{fig.3}}
\end{figure}

Since the zeros of $G(s)$ at $s=-m$ ($m=0,1,2,\ldots$) are zeros of order $m+1$, it follows that the pole of $1/G(1-k-\varepsilon)$ at $\varepsilon=0$ is a 
simple pole for $k=1$, a pole of order 2 for $k=2$, etc. In Appendix~\ref{residues} 
we evaluate $\rho_1$ and $\rho_2$ and the result is 
\begin{equation}
\label{res.s1}
\rho_1 = -\Gamma(\nu+1) G(1-\alpha) 
\end{equation}
and 
\begin{equation}
\label{res.s2}
\rho_2 = \Gamma(\nu+2) G(-\alpha) [\log{\kappa t} - \psi(\nu+2) + \alpha -(\alpha+1)\psi(-\alpha) - 2\gamma + 2] ,
\end{equation}
where $\gamma$ is the Euler-Mascheroni constant and $\psi(\cdot)$ is the digamma function.   

So using eq.\eqref{res.s1} and eq.\eqref{res.s2} in eq.\eqref{asymptotic.psi}, we obtain  
\begin{equation}\label{asypsi}
\begin{aligned}
\Psi_\alpha^\nu(-\kappa t) = & 
\frac{G(1+\nu)(\kappa t)^{-\nu}}{G(2+\nu-\alpha)} \bigg[ \Gamma(\nu+1)G(1-\alpha) 
(\kappa t)^{-1} \\[1ex]
& -\Gamma(\nu+2)G(-\alpha) (\kappa t)^{-2}\log{\kappa t} \\[1ex]
& + \Gamma(\nu+2)G(-\alpha)\left[\psi(\nu+2)-\alpha + (\alpha+1)\psi(-\alpha)+2\gamma-2\right] (\kappa t)^{-2} \\[1ex]
&+ 
\mathcal{O}(|\kappa t|^{-5/2})\bigg] . 
\end{aligned}
\end{equation}

\subsection{Laplace transform of the solution}


Let us start with the following.

\begin{theorem}
    Let $\mathscr{L}_t[f(t);z]$ be the Laplace transform of the function $f(t)$ with conjugate variable $z$. Here $t$ denotes the time variable and $z$ is the Laplace dual variable. Then 
\begin{equation}
\label{laplace.psi}
\begin{split} 
& \mathscr{L}_t[t^\mu \Psi_\alpha^\nu(-\kappa t);z] \\[1ex]
& \quad = 
\frac{1}{z^{1+\mu}} \frac{G(1+\nu)}{G(2+\nu-\alpha)} \frac{1}{2\pi i} 
\int_{c-\infty}^{c+\infty} \left(\frac{\kappa}{z}\right)^{-s} \Gamma(1+\mu-s) 
\Gamma(s) \frac{G(2+\nu-\alpha-s)}{G(1+\nu-s)} \, ds 
\end{split}
\end{equation}
with $\mu > -1$. 
\end{theorem}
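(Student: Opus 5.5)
The plan is to start from the Mellin--Barnes representation \eqref{inverse.mellin} of $\Psi_\alpha^\nu(-\kappa t)$, with $f_0$ given by \eqref{def.f_0}:
\begin{equation}
\Psi_\alpha^\nu(-\kappa t) = \frac{G(1+\nu)}{G(2+\nu-\alpha)}\frac{1}{2\pi i}\int_{c-i\infty}^{c+i\infty} (\kappa t)^{-s}\Gamma(s)\frac{G(2+\nu-\alpha-s)}{G(1+\nu-s)}\,ds ,
\end{equation}
where $0<c<1+\nu$. We multiply by $t^\mu e^{-zt}$, integrate over $t\in(0,\infty)$, and exchange the $t$- and $s$-integrations. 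The inner $t$-integral is elementary:
\begin{equation}
\int_0^\infty t^{\mu-s}e^{-zt}\,dt=\Gamma(1+\mu-s)\,z^{s-\mu-1},
\end{equation}
which is valid for $\operatorname{Re}z>0$ and $\operatorname{Re}(1+\mu-s)>0$, that is, $c<1+\mu$. Combining $\kappa^{-s}z^{s}=(\kappa/z)^{-s}$ with the prefactor $z^{-1-\mu}$ gives exactly \eqref{laplace.psi}. For this we choose the abscissa $c$ with $0<c<\min(1+\nu,1+\mu)$. Such a $c$ exists because $\mu>-1$ and $\nu>-1$, and on this line the poles of $\Gamma(s)$ lie to the left while the poles of $\Gamma(1+\mu-s)$ and of $1/G(1+\nu-s)$ lie to the right.

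The main step is justifying the Fubini exchange. I would bound the absolute double integral by
\begin{equation}
\int_{-\infty}^{\infty}\bigl|J(1,c+iy)\bigr|\,\kappa^{-c}\int_0^\infty t^{\mu-c}e^{-t\operatorname{Re}z}\,dt\,dy ,
\end{equation}
taking $\kappa>0$ real and $z>0$ first. Since $|t^{-s}|=t^{-c}$ on the line, the $t$-integral is finite because $\mu-c>-1$. For the $y$-integral, use the estimate \eqref{exp.log.integrand} from the proof of Theorem~\ref{main} on a vertical line, where $\cos\theta\to0$ and $\theta\to\pm\pi/2$. With $\arg(\kappa t)=0$ it gives decay $\exp\bigl((\alpha/2-1)\pi|y|\bigr)$ times a power of $|y|$, which is integrable. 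The argument in that proof was written for the line through the imaginary axis, but the same Stirling/Ferreira asymptotics hold uniformly on any fixed vertical line, since shifting $\operatorname{Re}s$ only alters polynomial factors.

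Finally, I would extend the identity to complex $z$ with $\operatorname{Re}z>0$ by analytic continuation. Alternatively, note that the right-hand integrand gains the factor $|\Gamma(1+\mu-s)|\sim e^{-\pi|y|/2}|y|^{\mu+1/2-c}$ and $|(\kappa/z)^{-s}|$ grows at most like $e^{|\arg z|\,|y|}$. So the right-hand integral converges for $|\arg(z/\kappa)|<(3/2-\alpha/2)\pi$, which covers the right half-plane, and both sides are analytic in $z$ there. I expect the uniform vertical-line bound for the $G$-ratio to be the only delicate point. Everything else is the standard Mellin--Laplace computation.
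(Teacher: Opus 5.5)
Your proposal is correct and follows the paper's route. You start from the Mellin--Barnes representation of $\Psi_\alpha^\nu(-\kappa t)$, interchange the $t$- and $s$-integrals, evaluate the inner integral as $\Gamma(1+\mu-s)z^{s-\mu-1}$ with $0<c<\min(1+\nu,1+\mu)$, and check convergence of the result for $|\arg z|<(3-\alpha)\pi/2$. The one real addition is that the paper simply assumes the interchange is allowed, whereas your bound justifies Fubini directly for all $\operatorname{Re}z>0$, so the analytic-continuation step is not needed. That bound combines the $e^{-(1-\alpha/2)\pi|y|}$ decay on the vertical line with $\int_0^\infty t^{\mu-c}e^{-t\operatorname{Re}z}\,dt<\infty$.
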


\begin{proof} 
We saw in Theorem \ref{main} that 
\begin{equation}
\Psi_\alpha^\nu(-\kappa t) = 
\frac{G(1+\nu)}{G(2+\nu-\alpha)} \frac{1}{2\pi i} 
\int_{c-\infty}^{c+\infty} (\kappa t)^{-s} 
\Gamma(s) \frac{G(2+\nu-\alpha-s)}{G(1+\nu-s)} \, ds 
\end{equation}
we have $0 < c< 1+\nu $ with $\nu > -1$, and that this integral converges absolutely for $|\operatorname{arg}(\kappa t)| < (1-\alpha)\pi/2$. Taking the Laplace transform of $f(t) = t^\mu \Psi_\alpha^\nu(-\kappa t)$ and assuming we can change the order of integration gives eq.\eqref{laplace.psi}. 

Now let us see whether it is well defined. In the integration giving $\Gamma(1+\mu-s)$ it is required that $\mu-\operatorname{Re}s  > -1$. Since $0 < c < 1+ \nu$, if we assume that $\mu > -1$ we can choose $c$ such that 
$0 < c < \min(1+\nu,1+\mu)$, and so the condition $\mu - \operatorname{Re}s> -1$ is satisfied. 

In relation to the convergence of the integral, let us analyze 
\begin{equation}
\log\bigg| \left(\frac{\kappa}{z}\right)^{-s} \Gamma(1+\mu-s) 
\Gamma(s) \frac{G(2+\nu-\alpha-s)}{G(1+\nu-s)}\bigg| = 
\log|\Gamma(1+\mu-s)| + 
\log|J(z^{-1},s)| ,
\end{equation}
with $\log|J(z^{-1},s)|$ as in eq.\eqref{log.integrand}. Writing 
$s = R\operatorname{e}^{i\theta}$, the Stirling formula gives 
\begin{equation}
\log|\Gamma(1+\mu-s)| = -\cos\theta R\log{R} + (\theta\sin\theta + \cos\theta - \pi\sin|\theta|)R + (\mu+1/2)\log{R} + \mathcal{O}(1) , 
\end{equation}
while for $\log{|J(z^{-1},s)|}$ we have eq.\eqref{exp.log.integrand}. So we obtain 
\begin{equation}
\label{log.integrand.bis}
\begin{split}
& \log\bigg| \left(\frac{\kappa}{z}\right)^{-s} \Gamma(1+\mu-s) 
\Gamma(s) \frac{G(2+\nu-\alpha-s)}{G(1+\nu-s)}\bigg| \\[1ex]
& = (\alpha-1)\cos\theta R\log{R} + A^\prime R + 
\left(\frac{(\alpha+\nu)(\alpha+\nu-2)}{2} + \mu + \frac{1}{2}\right)\log{R} + 
\mathcal{O}(1) ,
\end{split}
\end{equation}
where 
\begin{equation}
A^\prime = (1-\alpha)(\theta\sin\theta+\cos\theta) + (2-\alpha)\pi \sin|\theta| 
-\cos\theta \log\left|\frac{\kappa}{z}\right| + \sin\theta \operatorname{arg}\left(\frac{\kappa}{z}\right) .
\end{equation}
Thus repeating the same argument in Theorem \ref{main} and for $\kappa > 0$, we conclude 
that the integral along $(c-i\infty,c+i\infty)$ converges absolutely for 
\begin{equation}
    |\operatorname{arg}z| < (3-\alpha)\frac{\pi}{2} .
\end{equation}
\end{proof}

Eq.\eqref{log.integrand.bis} guarantees that for $\alpha < 1$ we can consider a 
closed contour by adding to the contour $(c-i\infty,c+i\infty)$ an arc on the right half-plane and then 
use the residue theorem to obtain a series representation for 
$\mathscr{L}_t[t^\mu \Psi_\alpha^\nu(-\kappa t);z]$. 
The poles to the right of the contour $(c-i\infty,c+i\infty)$ are those arising from $\Gamma(1+\mu-s)$ in the numerator (that is, $s_n^{(1)} = 1 + \mu + n$ for $n = 0,1,2,\ldots$) and from $G(1+\nu-s)$ in the denominator (that is, $s_m^{(2)} = 1 + \nu + m$ for $m = 0,1,2,\ldots$) of the integrand. 
The poles $s_n^{(1)}$ are simple poles, whereas the poles $s_m^{(2)}$ are poles of order $m+1$. Therefore, finding the general term of the series representation of $\mathscr{L}_t[t^\mu \Psi_\alpha^\nu(-\kappa t);z]$ is a difficult task due to the poles $s_m^{(2)}$. However, the first term of the series can be obtained explicitly, and we proceed by distinguishing the cases according to the relation between $\nu$ and $\mu$.

\begin{theorem}
\label{theorem.laplace.3cases}
For the Laplace transform $\mathscr{L}_t[t^\mu \Psi_\alpha^\nu(-\kappa t);z]$ 
it holds, as $z \to 0$,
\begin{equation}
\label{laplace.first.term}
\begin{split}
& \mathscr{L}_t[t^\mu \Psi_\alpha^\nu(-\kappa t);z] \\[1ex]
& = 
\begin{cases}
\displaystyle \frac{\Gamma(1+\mu)G(1+\nu)G(1+\nu-\mu-\alpha)}{\kappa^{1+\mu} G(2+\nu-\alpha) G(\nu-\mu)} 
+ \mathcal{O}\big(z^{\min(\nu-1,1)}\big) , \quad \text{if} \; \nu > \mu \\[2ex]
\displaystyle \frac{G(2+\nu)\Gamma(\mu-\nu)G(1-\alpha)}{\kappa^{1+\nu} G(2+\nu-\alpha)}\, 
\frac{1}{z^{\mu-\nu}} + \mathcal{O}\big(z^{\min(0,1+\nu-\mu)}\big) , \quad 
\text{if}\; \mu > \nu \\[2ex]
\displaystyle -\frac{G(2+\nu)G(1-\alpha)}{\kappa^{1+\nu}G(2+\nu-\alpha)}\log z + 
\mathcal{O}(1) , \quad \text{if} \; \nu = \mu 
\end{cases}
\end{split}
\end{equation}
\end{theorem}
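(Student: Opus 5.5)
We take as starting point the Mellin--Barnes representation \eqref{laplace.psi} of the previous theorem,
\[
\mathscr{L}_t[t^\mu \Psi_\alpha^\nu(-\kappa t);z]=\frac{f_0}{z^{1+\mu}}\frac{1}{2\pi i}\int_{c-i\infty}^{c+i\infty}\Big(\frac{\kappa}{z}\Big)^{-s}\Gamma(1+\mu-s)\Gamma(s)\frac{G(2+\nu-\alpha-s)}{G(1+\nu-s)}\,ds ,
\]
with $f_0$ as in \eqref{def.f_0} and $0<c<\min(1+\nu,1+\mu)$. As $z\to0$ we have $|\kappa/z|\to\infty$, so the factor $(\kappa/z)^{-s}$ decays when $\operatorname{Re}s$ grows. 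We therefore shift the contour to the \emph{right}, which is the direction allowed by \eqref{log.integrand.bis}: for $\alpha<1$ the leading term $(\alpha-1)\cos\theta\,R\log R$ is negative in the right half-plane. We collect the residues at the first poles to the right of $c$ and estimate the remaining integral on a vertical line $\operatorname{Re}s=c'$ by $\mathcal{O}(|\kappa/z|^{-c'})$. This mirrors the argument used for the asymptotic expansion \eqref{asymptotic.psi}, with the vanishing of the horizontal pieces $C_\uparrow,C_\downarrow$ handled as in the appendices.

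The poles to the right are $s=1+\mu+n$, which are simple and come from $\Gamma(1+\mu-s)$, and $s=1+\nu+m$, of order $m+1$, which come from $1/G(1+\nu-s)$. The first pole encountered is at $\min(1+\mu,1+\nu)$, and each residue contributes $z^{-(1+\mu)}(\kappa/z)^{-s_0}=\kappa^{-s_0}z^{s_0-1-\mu}$. We distinguish three cases.

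\textbf{Case $\nu>\mu$.} The first pole is $s=1+\mu$. Its residue is
\[
-\Gamma(1+\mu)\,\frac{G(1+\nu-\mu-\alpha)}{G(\nu-\mu)}\,\kappa^{-(1+\mu)},
\]
and the minus sign is absorbed by the orientation of the contour. Multiplied by $f_0$, this gives the constant in \eqref{laplace.first.term}. The next poles are $s=2+\mu$ and $s=1+\nu$, which produce powers $z^{1}$ and $z^{\nu-\mu}$. This yields a remainder of order $z^{\min(\nu-\mu,1)}$; we will reconcile this with the stated exponent, possibly after reading it as $\min(\nu-\mu,1)$.

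\textbf{Case $\mu>\nu$.} The first pole is the simple pole at $s=1+\nu$, since $G$ has a simple zero at $0$. We use $1/G(1+\nu-s)\sim 1/(G'(1)\,(1+\nu-s))$ together with $G'(1)=1$, which follows from $G(1+z)=\Gamma(z)G(z)$ near $z=0$. The residue is then
\[
\Gamma(\mu-\nu)\,\Gamma(1+\nu)\,G(1-\alpha)\,\kappa^{-(1+\nu)}\,z^{\nu-\mu}.
\]
Multiplying by $f_0=G(1+\nu)/G(2+\nu-\alpha)$ and using $\Gamma(1+\nu)G(1+\nu)=G(2+\nu)$ gives the stated leading term. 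The error comes from the next pole, $\min(1+\mu,2+\nu)$, giving $\mathcal{O}(z^{\min(0,1+\nu-\mu)})$; here the pole at $1+\mu$ contributes $\mathcal{O}(1)$.

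\textbf{Case $\nu=\mu$.} The two poles merge into a double pole at $s=1+\nu$, coming from $\Gamma(1+\mu-s)$ and the zero of $G(1+\nu-s)$. The residue of a double pole involves the derivative of $(\kappa/z)^{-s}$, namely $-\log(\kappa/z)\,(\kappa/z)^{-s}$. This produces $-\log z$ times the same coefficient $G(2+\nu)G(1-\alpha)/\kappa^{1+\nu}$ (with the factor $\Gamma(\mu-\nu)$ replaced by the leading Laurent coefficient $-1$, and the sign adjusted). All other terms, including $\log\kappa$ and the digamma contributions, are $\mathcal{O}(1)$.

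\textbf{Main obstacle.} The delicate step is justifying the contour shift rigorously. We must show that the integrals over the horizontal segments at height $\pm\tau$ vanish as $\tau\to\infty$ uniformly for small $z$ with $|\arg z|<(3-\alpha)\pi/2$, and that the shifted vertical integral is genuinely $\mathcal{O}(|z|^{c'-1-\mu})$. Both follow from \eqref{log.integrand.bis}: on $\theta=\pm\pi/2$ the exponent is $A'R$ with a negative coefficient, plus polynomial factors. A second obstacle is keeping track of the signs from the clockwise orientation and of the Laurent coefficients in the double-pole case.
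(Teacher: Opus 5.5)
Your proposal is correct and follows the paper's proof: start from the Mellin--Barnes representation \eqref{laplace.psi}, move the contour to the right (allowed by \eqref{log.integrand.bis} since $\alpha<1$), and keep the residue at the first pole $s=\min(1+\mu,1+\nu)$ in each of the three cases; your reading of the remainder for $\nu>\mu$ as $\mathcal{O}(z^{\min(\nu-\mu,1)})$ is exactly what the paper's own argument yields ($z^{-(1+\mu)}\mathcal{O}(z^{\min(2+\mu,1+\nu)})$), so $\min(\nu-1,1)$ in the statement is a typo. Two small corrections: for $\mu>\nu$ the fact you need is $G'(0)=1$, i.e.\ $G(w)\sim w$ as $w\to 0$ (whereas $G'(1)=\tfrac12(\log 2\pi-1)$), and because $s=2+\nu$ is a double pole, for $\mu-\nu>1$ the remainder is really $\mathcal{O}(z^{1+\nu-\mu}|\log z|)$, a logarithmic factor that the paper's statement also omits.
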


\begin{proof}
Let us start with the case (i) $\nu > \mu$. In this case the first pole is $s_0^{(1)}$ and the second one is $\min(s_1^{(1)},s_0^{(2)})$. So we can write
\begin{equation}
 \mathscr{L}_t[t^\mu\Psi_\alpha^\nu(-\kappa t);z] =
-\frac{G(1+\nu)}{G(2+\nu-\alpha)} \frac{1}{z^{1+\mu}} \rho_1^{\text{(i)}} + \frac{1}{z^{1+\mu}}\mathcal{O}\big(z^{\min(2+\mu,1+\nu)}\big) ,
\end{equation}
where 
\begin{equation}
\label{residue.case.i}
 \rho_1^{\text{(i)}}  = \underset{s=1+\mu}{\operatorname{Res}} \left[ \left(\frac{\kappa}{z}\right)^{-s} \Gamma(1+\mu-s)\Gamma(s)\frac{G(2+\nu-\alpha-s)}{G(1+\nu-s)} \right]
\end{equation}
and the minus sign is due to the orientation of the closed contour. 
In Appendix~\ref{residues} we show that 
\begin{equation}
    \rho_1^{\text{(i)}} =  -\left(\frac{\kappa}{z}\right)^{-(1+\mu)} \Gamma(1+\mu) \frac{G(1+\nu-\mu-\alpha)}{G(\nu-\mu)} 
\end{equation}
which in the above expression gives the case $\nu > \mu$ in eq.\eqref{laplace.first.term}. 

Next we consider the case (ii) $\mu > \nu$. Now the first pole is $s_0^{(2)}$ and the second one is 
$\min(s_1^{(2)},s_0^{(1)})$. So we have 
\begin{equation}
\mathscr{L}_t[t^\mu\Psi_\alpha^\nu(-\kappa t);z] =
-\frac{G(1+\nu)}{G(2+\nu-\alpha)} \frac{1}{z^{1+\mu}} \rho_1^{\text{(ii)}} + \frac{1}{z^{1+\mu}}\mathcal{O}\big(z^{\min(2+\nu,1+\mu)}\big) ,
\end{equation}
where 
\begin{equation}
\label{residue.case.ii}
    \rho_1^{\text{(ii)}} =  \underset{s=1+\nu}{\operatorname{Res}} \left[ \left(\frac{\kappa}{z}\right)^{-s} \Gamma(1+\mu-s)\Gamma(s)\frac{G(2+\nu-\alpha-s)}{G(1+\nu-s)} \right] .
\end{equation}
In Appendix~\ref{residues} we show that 
\begin{equation}
\rho_1^{\text{(ii)}} = -\left(\frac{\kappa}{z}\right)^{-(1+\nu)} \Gamma(\mu-\nu)\Gamma(1+\nu) G(1-\alpha)
\end{equation}
which gives the case $\mu > \nu$ in eq.\eqref{laplace.first.term}. 

Finally, for the case (iii) $\nu = \mu$ the first pole is $s_0^{(1)} = s_0^{(2)} = 1 + \nu$, the second one is $s_1^{(1)} = s_1^{(2)} = 2 + \nu$, etc. Note that $s_0^{(1)} = s_0^{(2)}$ is a pole of order two, $s_1^{(1)} = s_1^{(2)}$ is a pole of order three, etc. We have 
\begin{equation}
\mathscr{L}_t[t^\mu \Psi_\alpha^\nu(-\kappa t);z] = 
-\frac{G(1+\nu)}{G(2+\nu-\alpha)} \frac{1}{z^{1+\nu}} \rho_1^{\text{(iii)}} + 
\frac{1}{z^{1+\nu}} \mathcal{O}(z^{2+\nu}) 
\end{equation}
where 
\begin{equation}
\label{residues.case.iii}
    \rho_1^{\text{(iii)}} = \underset{s=1+\nu}{\operatorname{Res}} 
    \left[ \left(\frac{\kappa}{z}\right)^{-s}\Gamma(1+\nu-s) \Gamma(s)\frac{G(2+\nu-\alpha-s)}{G(1+\nu-s)} \right]
\end{equation}
In Appendix~\ref{residues} we show that 
\begin{equation}
\rho_1^{\text{(iii)}} = \left(\frac{\kappa}{z}\right)^{-(1+\nu)} \Gamma(1+\nu) G(1-\alpha) 
\left[2\gamma + \psi(1+\nu) -\alpha + \alpha \psi(1-\alpha) + \log\left(\frac{z}{\kappa}\right)\right]
\end{equation}
which gives the case $\mu = \nu$ in eq.\eqref{laplace.first.term}. 

\end{proof}

The three regimes in Theorem~\ref{theorem.laplace.3cases} reflect the competition between the parameters $\mu$ and $\nu$, which determine the dominant contribution of the Mellin--Barnes integrand. When $\nu > \mu$, the leading behavior is governed by the simple poles of $\Gamma(1+\mu-s)$, yielding a finite limit as $z \to 0$. In contrast, when $\mu > \nu$, the dominant contribution arises from the poles of $G(1+\nu-s)$, producing a power-law divergence in $z$. The critical case $\nu = \mu$ corresponds to a coalescence of poles, leading to a logarithmic term.

 \section{Second-order differential equation}

Let us consider the second order homogeneous fractional differential equation 
\begin{equation}
\label{2nd.order.eq}
\left(\mathscr{D}_t^{(\alpha,\nu)}\right)^2 f(t) + a \mathscr{D}_t^{(\alpha,\nu)}f(t) + b f(t) = 0 . 
\end{equation}
where $\left(\mathscr{D}_t^{(\alpha,\nu)}\right)^2 f(t) = 
\mathscr{D}_t^{(\alpha,\nu)}\left(\left(\mathscr{D}_t^{(\alpha,\nu)}\right) f(t)\right) $ is the sequential beta-weighted derivative of order two, 
and $a$ and $b$ are real constants. 

The use of sequential fractional 
derivatives is motivated by the fact that, 
in general, fractional differential operators do not satisfy the semi-group property. 
In particular, for $\mathscr{D}_t^{(\alpha,\nu)}$, in general we have  
$\mathscr{D}_t^{(\alpha,\nu)}\left(\mathscr{D}_t^{(\alpha,\nu)} f
\right) \neq \mathscr{D}_t^{(2\alpha,\nu)} f$. Sequential derivatives therefore 
provide a natural generalization of higher-order differential equations obtained by 
iterating the same dynamical mechanism. This point of view is also motivated by the
structure of several classical equations of mathematical physics, which arise from
the successive application or combination of first-order laws. 
For example, the diffusion equation follows from the continuity equation together 
with Fick's law, while Newton's second law may be written as $F=\frac{dp}{dt}$ with
$p=m\frac{dx}{dt}$, leading to a second-order equation for the position. 
This formulation is particularly relevant in systems with variable mass, 
where the momentum balance equation $F=\frac{d}{dt}(mv)$ cannot be reduced 
to the simple form $F=m\,d^2x/dt^2$. From this perspective, 
sequential fractional equations provide a framework for extending
such constructions to systems with memory, 
where the same fractional dynamical mechanism acts recursively 
or in multiple stages.  

\begin{theorem}
Let $\lambda_1$ and $\lambda_2$ be such that
$$
a=\lambda_1+\lambda_2,
\qquad 
b=\lambda_1\lambda_2 ,
$$
and consider the differential equation \eqref{2nd.order.eq}. 
Then:
\begin{enumerate}
\item[\textrm{(i)}] If $\lambda_1\neq \lambda_2$, the general solution of eq.\eqref{2nd.order.eq} is
\begin{equation}
\label{second.order.sol.1}
f(t)=c_1\Psi_\alpha^\nu(-\lambda_1 t)
+c_2\Psi_\alpha^\nu(-\lambda_2 t).
\end{equation}
where $\Psi_\alpha^\nu$ is defined in eq.\eqref{eq.solution.final} 
and the constants $c_1$ and $c_2$ are chosen so that $f(t)$ is real-valued
whenever $\lambda_1$ and $\lambda_2$ are complex conjugates. 
\item[\textrm{(ii)}] If $\lambda_1=\lambda_2$, the general solution of eq.\eqref{2nd.order.eq} is
\begin{equation}
\label{second.order.sol.2}
f(t)=c_1^\prime \Psi_\alpha^\nu(-\lambda_1 t)
+c_2^\prime \Psi_\alpha^{\nu+1}(-\lambda_1 t) ,
\end{equation}
where $c_1^\prime$ and $c_2^\prime$ are real constants. 
\end{enumerate}
\end{theorem}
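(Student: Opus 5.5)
The plan is to factor the operator. Since $\mathscr{D}_t^{(\alpha,\nu)}$ is linear and $a,b$ are constants, with $a=\lambda_1+\lambda_2$ and $b=\lambda_1\lambda_2$ the equation \eqref{2nd.order.eq} can be written as
$$
\big(\mathscr{D}_t^{(\alpha,\nu)}+\lambda_1\big)\big(\mathscr{D}_t^{(\alpha,\nu)}+\lambda_2\big)f=0 .
$$
The two factors commute, because each is a polynomial in the single operator $\mathscr{D}_t^{(\alpha,\nu)}$. All manipulations will be done on power series $\sum_n c_n t^n$, using the action \eqref{eq.derivative.rho}: $\mathscr{D}_t^{(\alpha,\nu)}t^n=\frac{n\Gamma(\nu+n)}{\Gamma(\nu+n+1-\alpha)}t^{n-1}$. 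The series of Corollary on $\Psi_\alpha^\nu$ converges everywhere, so termwise application is legitimate.

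\textbf{Case (i).} By Theorem~\ref{main}, $g_i(t)=\Psi_\alpha^\nu(-\lambda_i t)$ satisfies $(\mathscr{D}_t^{(\alpha,\nu)}+\lambda_i)g_i=0$, and the same series argument works for complex $\lambda_i$. Commutativity of the factors then shows that $g_1$ and $g_2$ both solve \eqref{2nd.order.eq}. They are linearly independent for $\lambda_1\neq\lambda_2$: at $t=0$ one has $g_i(0)=1$ and $\mathscr{D}_t^{(\alpha,\nu)}g_i(0)=-\lambda_i$, so the Wronskian-type matrix built from the data $(f(0),\mathscr{D}f(0))$ is nonsingular. To show that every solution is of this form, I would take the Mellin transform of \eqref{2nd.order.eq} using Lemma~\ref{lemma.mellin} twice. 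This gives a three-term recursion linking $F(s)$, $F(s-1)$ and $F(s-2)$, which, after the normalisation \eqref{mel}, factors into the two first-order recursions \eqref{eq.S} with $\kappa=\lambda_1$ and $\kappa=\lambda_2$. Equivalently, in series language, the coefficients $c_n$ are uniquely determined by $c_0=f(0)$ and $c_1$ (which fixes $\mathscr{D}f(0)$), so the solution space is two-dimensional. For complex conjugate $\lambda_{1,2}$ one takes $c_2=\overline{c_1}$, which gives a real-valued solution.

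\textbf{Case (ii).} For $\lambda_1=\lambda_2=\lambda$ I would mimic the classical repeated-root construction. Differentiate the identity $(\mathscr{D}_t^{(\alpha,\nu)}+\lambda)\Psi_\alpha^\nu(-\lambda t)=0$ with respect to $\lambda$, which is allowed termwise since the series is entire in $\lambda$. This gives
$$
(\mathscr{D}_t^{(\alpha,\nu)}+\lambda)\,\partial_\lambda\Psi_\alpha^\nu(-\lambda t)=-\Psi_\alpha^\nu(-\lambda t),
$$
hence $(\mathscr{D}_t^{(\alpha,\nu)}+\lambda)^2\,\partial_\lambda\Psi_\alpha^\nu(-\lambda t)=0$. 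So $h=\partial_\lambda\Psi_\alpha^\nu(-\lambda t)=-t\,(\Psi_\alpha^\nu)'(-\lambda t)$ is a second solution, independent of $\Psi_\alpha^\nu(-\lambda t)$ because $h(0)=0$ while $\mathscr{D}h(0)\neq0$. The same counting of initial data as in case (i) then gives two-dimensionality.

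The remaining, and in my view main, step is to identify this second solution with $\Psi_\alpha^{\nu+1}(-\lambda t)$ as stated. The approach is to compare coefficients, using $G(z+1)=\Gamma(z)G(z)$ and the rule $\mathscr{D}_t^{(\alpha,\nu)}t^n$ above, and to compute $(\mathscr{D}_t^{(\alpha,\nu)}+\lambda)\Psi_\alpha^{\nu+1}(-\lambda t)$ directly. Writing $d_n$ for the coefficients of $\Psi_\alpha^{\nu+1}$, this gives
$$
(\mathscr{D}_t^{(\alpha,\nu)}+\lambda)\Psi_\alpha^{\nu+1}(-\lambda t)\propto\sum_{n\ge 1}\frac{(1-\alpha)\,d_{n-1}}{\nu+n}(-\lambda)^n t^{n-1}.
$$
One must then check whether this is a constant multiple of $\Psi_\alpha^\nu(-\lambda t)$. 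A preliminary computation suggests that the ratio of the coefficients is $\Gamma(1+n-\alpha+\nu)/\Gamma(n+\nu+1)$, which depends on $n$. If that persists after careful bookkeeping, then $\Psi_\alpha^{\nu+1}(-\lambda t)$ is not literally a solution. In that case I would state case (ii) with the $\lambda$-derivative $\partial_\lambda\Psi_\alpha^\nu(-\lambda t)$ as the second basis element, or else find the correct normalisation or convention under which it coincides with the paper's $\Psi_\alpha^{\nu+1}$.
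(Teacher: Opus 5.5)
Your case (i) is correct and takes a different route from the paper. The paper splits the equation into $\mathscr{D}_t^{(\alpha,\nu)}f+\lambda_2 f=g$ and $\mathscr{D}_t^{(\alpha,\nu)}g+\lambda_1 g=0$. It then Mellin-transforms the first equation with $g=c_0\Psi_\alpha^\nu(-\lambda_1 t)$ and solves the resulting first-order difference equation for $\Upsilon(s)=\frac{G(1+\nu-s)}{G(2+\nu-\alpha-s)}\frac{F(s)}{\Gamma(s)}$ by $A_1\lambda_1^{-s}+A_2\lambda_2^{-s}$. Your power-series argument (factorisation, plus the recursion $c_nw_nw_{n-1}+ac_{n-1}w_{n-1}+bc_{n-2}=0$ with $w_n=n\Gamma(\nu+n)/\Gamma(\nu+n+1-\alpha)\neq 0$) proves two-dimensionality in the class of power series. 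The Mellin argument only gets this after discarding $1$-periodic factors in $\Upsilon_{\scriptscriptstyle H}$. In exchange, the Mellin route works in the class of functions with Mellin transforms, so it builds in the power-law decay of $\Psi_\alpha^\nu$ at infinity.

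In case (ii) your suspicion is right: statement (ii) is false as printed, so you should not look for a convention that rescues it. Your coefficient computation is correct. The ratio of the $t^{n-1}$ coefficients of $(\mathscr{D}_t^{(\alpha,\nu)}+\lambda)\Psi_\alpha^{\nu+1}(-\lambda t)$ and of $\Psi_\alpha^{\nu}(-\lambda t)$ is $-\lambda(1-\alpha)\frac{\Gamma(1+\nu)}{\Gamma(2+\nu-\alpha)}\frac{\Gamma(1+n+\nu-\alpha)}{\Gamma(1+n+\nu)}$, which is not constant in $n$ for $\alpha>0$. The kernel of $\mathscr{D}_t^{(\alpha,\nu)}+\lambda$ on power series is spanned by $\Psi_\alpha^\nu(-\lambda t)$, by the recursion $c_nw_n=-\lambda c_{n-1}$. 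Hence $\Psi_\alpha^{\nu+1}(-\lambda t)$ is not a solution for $\lambda\neq 0$. For $\lambda_1=\lambda_2=0$ the stated pair even collapses to the constants, while $f=t$ solves $(\mathscr{D}_t^{(\alpha,\nu)})^2 f=0$.

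What your proposal lacks is only the identification of your second solution. On functions of $\lambda t$ one has $\lambda\partial_\lambda=t\partial_t$. Also, $G(z+1)=\Gamma(z)G(z)$ gives $\frac{d}{dt}\Psi_\alpha^\nu(-\lambda t)=-\lambda\frac{\Gamma(2+\nu-\alpha)}{\Gamma(1+\nu)}\Psi_\alpha^{\nu+1}(-\lambda t)$; this is the identity in the paper's mean inter-arrival-time lemma, with $\nu$ shifted by one. Together these give
\[
\partial_\lambda\Psi_\alpha^\nu(-\lambda t)=-\frac{\Gamma(2+\nu-\alpha)}{\Gamma(1+\nu)}\,t\,\Psi_\alpha^{\nu+1}(-\lambda t),
\qquad
\big(\mathscr{D}_t^{(\alpha,\nu)}+\lambda\big)\big[t\,\Psi_\alpha^{\nu+1}(-\lambda t)\big]=\frac{\Gamma(1+\nu)}{\Gamma(2+\nu-\alpha)}\,\Psi_\alpha^\nu(-\lambda t).
\]
So the correct general solution is $c_1'\Psi_\alpha^\nu(-\lambda_1 t)+c_2'\,t\,\Psi_\alpha^{\nu+1}(-\lambda_1 t)$, and your argument, with this identification added, proves exactly that.

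This is also what the paper's own computation produces. The repeated-root particular solution $\Upsilon_{\scriptscriptstyle P}=A_3 s\lambda_1^{-s}$ puts the term $\lambda_1^{-(s+1)}\Gamma(s+1)G(2+\nu-\alpha-s)/G(1+\nu-s)$ into $F(s)$. Up to the normalising constant, this is $\mathcal{M}[\Psi_\alpha^{\nu+1}(-\lambda_1 t);s+1]=\mathcal{M}[t\,\Psi_\alpha^{\nu+1}(-\lambda_1 t);s]$. The factor $t$ is simply dropped in the paper's final inverse Mellin step.
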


\begin{proof} Let us write $a$ and $b$ in eq.\eqref{2nd.order.eq} as 
$a = \lambda_1 + \lambda_2$ and $b = \lambda_1 \lambda_2$ 
and factorize eq.\eqref{2nd.order.eq} as the pair of equations 
\begin{align}
\label{2nd.order.eq.1}
& \mathscr{D}_t^{(\alpha,\nu)} f(t) + \lambda_2 f(t) = g(t) , \\[1ex]
\label{2nd.order.eq.2}
& \mathscr{D}_t^{(\alpha,\nu)} g(t) + \lambda_1 g(t) = 0 . 
\end{align}
In Section~\ref{section.6} we saw that the solution of eq.\eqref{2nd.order.eq.2} is 
\begin{equation}
\label{solution.g}
    g(t) = c_0 \Psi_\alpha^\nu(-\lambda_1 t) , 
\end{equation}
where $c_0$ is an arbitrary constant. Applying the Mellin transform to eq.\eqref{2nd.order.eq.1} with the above expression for $g(t)$ and 
recalling eq.\eqref{mellin.derivative} we obtain 
\begin{equation}
    -\frac{\Gamma(s) \Gamma(1+\nu-s)}{\Gamma(s-1)\Gamma(2+\nu-\alpha-s)} F(s-1) 
    + \lambda_2 F(s) = c_0 \lambda_1^{-s} \frac{G(1+\nu)}{G(2-\alpha+\nu)} \, 
    \frac{\Gamma(s)\Gamma(2+\nu-\alpha-s)}{G(1+\nu-s)} . 
\end{equation}
which can be conveniently rewritten as 
\begin{equation}
\label{2nd.order.mellin}
-\Upsilon(s-1) + \lambda_2 \Upsilon(s) = c_0 \frac{G(1+\nu)}{G(2-\alpha-\nu)} \lambda_1^{-s} 
\end{equation}
with the definition 
\begin{equation}
    \Upsilon(s) = \frac{G(1+\nu-s)}{G(2+\nu-\alpha-s)}\, \frac{F(s)}{\Gamma(s)} .
\end{equation}

The solution of the homogeneous equation 
\begin{equation}
    -\Upsilon_{\scriptscriptstyle H}(s-1) + \lambda_2 \Upsilon_{\scriptscriptstyle H}(s) = 0 
\end{equation}
is easily seen to be 
\begin{equation}
    \Upsilon_{\scriptscriptstyle H}(s) = A_2 \lambda_2^{-s} ,
\end{equation}
where $A_2$ is an arbitrary constant. For the non-homogeneous equation we will look for a particular solution of the form 
\begin{equation}
    \Upsilon_{\scriptscriptstyle P}(s) = A_1 \lambda_1^{-s} ,
\end{equation}
which in eq.\eqref{2nd.order.mellin} gives 
\begin{equation}
    A_1(\lambda_2 - \lambda_1) = c_0 \frac{G(1+\nu)}{G(2-\alpha+\nu)} .
\end{equation}
Thus, if $\lambda_1 \neq \lambda_2$, we have a solution 
\begin{equation}
    \Upsilon_{\scriptscriptstyle P}(s) = \frac{c_0}{\lambda_2- \lambda_1} \frac{G(1+\nu)}{G(2-\alpha+\nu)}\lambda_1^{-s} . 
\end{equation}
So we can write the solution of eq.\eqref{2nd.order.mellin} as 
\begin{equation}
\Upsilon(s) = c_1 \frac{G(1+\nu)}{G(2-\alpha+\nu)} \lambda_1^{-s} 
+ c_2 \frac{G(1+\nu)}{G(2-\alpha+\nu)}\lambda_2^{-2} ,
\end{equation}
 where we defined $c_1$ and $c_2$ as 
\begin{equation}
    c_1 = \frac{c_0}{\lambda_2 - \lambda_1} , \qquad 
    c_2 = A_2 \frac{G(2-\alpha+\nu)}{G(1+\nu)} .
\end{equation}
In terms of $F(s)$ we have 
\begin{equation}
\begin{split} 
F(s) = & c_1 \frac{G(1+\nu)}{G(2-\alpha+\nu)} \lambda_1^{-s} \frac{\Gamma(s)G(2+\nu-\alpha-s)}{G(1+\nu-s)}\\[1ex]
& + c_2 \frac{G(1+\nu)}{G(2-\alpha+\nu)}\lambda_2^{-s} \frac{\Gamma(s)G(2+\nu-\alpha-s)}{G(1+\nu-s)},
\end{split}
\end{equation}
and taking inverse Mellin transform we obtain eq.\eqref{second.order.sol.1} 
when $\lambda_1 \neq \lambda_2$. 

If $\lambda_1 = \lambda_2$ we will look for a particular solution of the form 
\begin{equation}
    \Upsilon_{\scriptscriptstyle P}(s) = A_3 s \lambda_1^{-s} 
\end{equation}
which in eq.\eqref{2nd.order.mellin} gives 
\begin{equation}
    A_3 = \frac{c_0}{\lambda_1} \frac{G(1+\nu)}{G(2-\alpha+\nu)}
\end{equation}
and then 
\begin{equation}
\Upsilon(s) = c_1^\prime \frac{G(1+\nu)}{G(2-\alpha+\nu)} \lambda_1^{-s} 
+ c_0 \frac{G(1+\nu)}{G(2-\alpha+\nu)}s \lambda_1^{-(s+1)} ,
\end{equation}
and returning to $F(s)$, 
\begin{equation}
\begin{split} 
F(s) = & c_1^\prime \frac{G(1+\nu)}{G(2-\alpha+\nu)} \lambda_1^{-s} \frac{\Gamma(s)G(2+\nu-\alpha-s)}{G(1+\nu-s)}\\
& + c_0  \frac{G(1+\nu)}{G(2-\alpha+\nu)}\lambda_1^{-(s+1)} \frac{\Gamma(s+1)G(2+\nu-\alpha-s)}{G(1+\nu-s)} . 
\end{split}
\end{equation}
Finally, defining $c_2^\prime$ as 
\begin{equation}
c_0 = c_2^\prime \frac{\Gamma(1+\nu)}{\Gamma(2-\alpha+\nu)}
\end{equation}
and taking the inverse Mellin transform we obtain we obtain 
eq.\eqref{second.order.sol.2}. 
\end{proof}

\section{Stochastic applications}

\subsection{Scaling limits of CTRWs governed by the beta-weighted derivative}

The purpose of this section is to apply the results given in Section \ref{section.6} in order to define and study the limit of a continuous-time random walk
 with i.i.d. waiting times (between successive arrivals) $ J^{(\alpha,\nu)}$ with survival probability $\phi(t):=\mathbb{P}(J^{(\alpha,\nu)}>t)$ satisfying the relaxation equation 
\begin{equation} 
\mathscr{D}_t^{(\alpha,\nu)} \phi(t) = - \kappa \phi(t), \qquad t\geq 0, \label{ren1}
\end{equation}
under the initial condition $\phi(0)=1.$

As a consequence of Theorem \ref{main}, the inter-arrival-time distribution is equal to $\mathbb{P}(J^{(\alpha,\nu)}>t)=\Psi_\alpha^\nu(-\kappa t),$ for any $t \geq 0$. Since it is proved in Theorem \ref{second} that, for $\alpha \in(0,1)$, $\nu>-1$ and $\kappa >0$, the function $\phi(x) = \Psi_\alpha^\nu(-\kappa x)$ given in \eqref{eq.solution.final} is completely monotone (i.e. $(-1)^n\phi^{(n)}(x) \geq 0,$ for any $n \in \mathbb{N}$), then it can represent a suitable survival function. Moreover, it is evident from \eqref{eq.solution}, that $\Psi_{\alpha}^\nu(0)=1$.

Thus, we can define the renewal process $\left\{ N_{\alpha,\nu}(t)\right\}_{t\geq 0}$ as 
\begin{equation}N_{\alpha,\nu}(t):=\max\left\{ n \in \mathbb{N}_0: \sum_{i=1}^n J^{(\alpha,\nu)}_i \leq t\right\}, \qquad \alpha \in (0,1), \nu >-1,t \geq 0.\label{reninu}
\end{equation}

We evaluate the mean inter-arrival time of $N_{\alpha, \nu}$, as follows.
\begin{lem}
Let $\alpha \in (0,1)$ and $\nu >-1$, then the mean inter-arrival time of the process defined in \eqref{reninu} is given by 
\begin{equation}
    \mathbb{E}J^{(\alpha, \nu)}=\begin{cases}\displaystyle \frac{\Gamma(\nu)}{\kappa \Gamma(1-\alpha+\nu)}  , \; & \nu >0 , \\[1ex]
+\infty , & -1<\nu \leq 0 .\end{cases} \label{mit}
\end{equation}
\end{lem}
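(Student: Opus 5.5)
The plan is to write the mean waiting time as the integral of the survival function,
$$
\mathbb{E}J^{(\alpha,\nu)}=\int_0^\infty \mathbb{P}(J^{(\alpha,\nu)}>t)\,dt=\int_0^\infty \Psi_\alpha^\nu(-\kappa t)\,dt ,
$$
and then evaluate this integral through the stochastic representation \eqref{LTpsi} obtained in the proof of Theorem~\ref{second}. There, $\Psi_\alpha^\nu(-\kappa t)=\mathbb{E}\, e^{-t\kappa X_{\alpha,\nu}}$ with $X_{\alpha,\nu}>0$ almost surely. Since the integrand is nonnegative, Tonelli's theorem allows exchanging expectation and $dt$-integration:
$$
\mathbb{E}J^{(\alpha,\nu)}=\mathbb{E}\int_0^\infty e^{-t\kappa X_{\alpha,\nu}}\,dt=\frac{1}{\kappa}\,\mathbb{E}\big(X_{\alpha,\nu}^{-1}\big),
$$
which holds in $[0,+\infty]$ whether the expectation is finite or not.

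For $\nu>0$, the value $s=-1$ lies in the range $s>-(1+\nu)$ where the moment formula \eqref{mom} is valid. Inserting $s=-1$ gives
$$
\mathbb{E}\big(X_{\alpha,\nu}^{-1}\big)=\frac{G(1+\nu-\alpha)\,G(1+\nu)}{G(\nu)\,G(2+\nu-\alpha)} .
$$
The functional equation $G(z+1)=\Gamma(z)G(z)$ gives $G(1+\nu)/G(\nu)=\Gamma(\nu)$ and $G(1+\nu-\alpha)/G(2+\nu-\alpha)=1/\Gamma(1+\nu-\alpha)$. This yields $\Gamma(\nu)/(\kappa\Gamma(1-\alpha+\nu))$, which is the first case of \eqref{mit}.

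For $-1<\nu\le 0$, I must show that $\mathbb{E}(X_{\alpha,\nu}^{-1})=+\infty$, or equivalently that $\Psi_\alpha^\nu(-\kappa t)$ is not integrable at infinity. This is the step I expect to be the main obstacle. My first approach is the large-$t$ asymptotics \eqref{asypsi}, whose leading term is
$$
\Psi_\alpha^\nu(-\kappa t)\sim \frac{G(1+\nu)\Gamma(1+\nu)G(1-\alpha)}{G(2+\nu-\alpha)}\,(\kappa t)^{-(1+\nu)} .
$$
The constant is strictly positive, because $G$ is positive on $(0,\infty)$ and $1-\alpha,\ 1+\nu,\ 2+\nu-\alpha$ are all positive. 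Hence the tail behaves like $t^{-(1+\nu)}$ with exponent $1+\nu\le 1$, so the integral diverges; the borderline case $\nu=0$ gives a logarithmic divergence.

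As a cross-check not relying on the asymptotic expansion, I would use the Mellin transform $s\mapsto\mathbb{E}X_{\alpha,\nu}^{s}$. It is finite on $s>-(1+\nu)$ and, through the factor $1/G(1+\nu+s)$, blows up as $s\downarrow-(1+\nu)$. For the positive random variable $X_{\alpha,\nu}$, monotone convergence then forces $\mathbb{E}X_{\alpha,\nu}^{s}=+\infty$ for all $s\le-(1+\nu)$. When $\nu\le0$ we have $-1\le -(1+\nu)$, so in particular $\mathbb{E}X_{\alpha,\nu}^{-1}=+\infty$.
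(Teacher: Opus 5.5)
Your proof is correct, but for $\nu>0$ it takes a different route from the paper's main argument. The paper works with the series. Termwise it notes that $\Psi_\alpha^\nu(-\kappa x)=-\frac{1}{\kappa}\frac{\Gamma(\nu)}{\Gamma(1-\alpha+\nu)}\frac{d}{dx}\Psi_\alpha^{\nu-1}(-\kappa x)$ for $\nu>0$. The integral of the survival function then reduces to $\frac{\Gamma(\nu)}{\kappa\Gamma(1-\alpha+\nu)}\bigl[1-\lim_{x\to\infty}\Psi_\alpha^{\nu-1}(-\kappa x)\bigr]$. The paper uses the large-time expansion \eqref{asypsi} to kill this boundary term. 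For $-1<\nu\le 0$ it relies, implicitly, on the same expansion and the non-integrable tail $t^{-(1+\nu)}$, which is exactly your first argument there.

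You instead combine the Laplace-mixture representation \eqref{LTpsi}, Tonelli, and the negative moment $s=-1$ of \eqref{mom}. This is essentially the paper's subsequent Remark (the Mellin transform \eqref{mel} as $s\to1^-$) made rigorous. Tonelli gives $\mathcal{M}[\Psi_\alpha^\nu(-\kappa t);s]=\kappa^{-s}\Gamma(s)\,\mathbb{E}X_{\alpha,\nu}^{-s}$ as an identity in $[0,+\infty]$, with no convergence hypothesis to check.

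What your route buys:
\begin{itemize}
\item Both regimes follow from the single identity $\mathbb{E}J^{(\alpha,\nu)}=\kappa^{-1}\mathbb{E}X_{\alpha,\nu}^{-1}$.
\item Your cross-check for $\nu\le 0$ depends only on \eqref{mom}, not on the contour derivation of \eqref{asypsi}, which is the least rigorous part of the paper. That cross-check uses the simple zero of $G$ at $0$, which makes $\mathbb{E}X_{\alpha,\nu}^{s}$ blow up as $s\downarrow-(1+\nu)$.
\end{itemize}
To make the monotone-convergence step precise, apply it on $\{X_{\alpha,\nu}\le 1\}$; the contribution from $\{X_{\alpha,\nu}>1\}$ is bounded by $1$ for negative exponents.

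What the paper's route buys: it needs no probabilistic representation, only the power series and its asymptotics. It also exhibits the structural fact that lowering $\nu$ by one corresponds, up to a constant, to differentiation in $x$.
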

\begin{proof}
For any $\nu >0$, we consider
the definition \eqref{eq.solution} of $\Psi_\alpha^\nu(-\kappa x)$, so that 
\begin{equation}
\Psi_\alpha^\nu(-\kappa x) = -\frac{1}{\kappa}\frac{\Gamma(\nu)}{\Gamma(1-\alpha+\nu)}\frac{d\;}{dx}\Psi_{\alpha}^{\nu-1}(-\kappa x).  
\end{equation}
Therefore, we can write that
\begin{equation}
 \mathbb{E}J^{(\alpha, \nu)}=\int_0^\infty  \mathbb{P}(J^{(\alpha, \nu)}>x )dx=\int_0^\infty \Psi_\alpha^\nu(-\kappa x )\,dx = \frac{1}{\kappa} \frac{\Gamma(\nu)}{\Gamma(1-\alpha+\nu)}[1-\lim_{x\to \infty} \Psi_{\alpha}^{\nu-1}(-\kappa x)] ,
\end{equation}
which gives the result \eqref{mit}, by recalling the asymptotic expression \eqref{asypsi}.
\end{proof}

\begin{remark} 
The previous result can be alternatively obtained, in the case $\nu>0$, by recalling the Mellin transform of the function $\Psi_\alpha^\nu(-\kappa x)$, given in \eqref{mel}, with $f_0$  as in eq.\eqref{def.f_0}, as follows:
\begin{eqnarray}
 \mathbb{E}J^{(\alpha, \nu)}&=&\lim_{s \to 1^-}\int_0^\infty t^{s-1}\mathbb{P}(J^{(\alpha,\nu)}>t)dt=\lim_{s \to 1^-}\int_0^\infty t^{s-1}\Psi_{\alpha}^\nu(-\kappa t)dt \notag \\
 &=&\lim_{s \to 1^-} \kappa^{-s}\Gamma(s)\frac{G(1+\nu)}{G(2-\alpha+\nu)} \, \frac{G(2+\nu-\alpha-s)}{G(1+\nu-s)} \notag \\
 &=& \frac{1}{\kappa} \frac{G(1+\nu)}{G(2-\alpha+\nu)} \frac{G(1+\nu-\alpha)}{\nu}
\end{eqnarray} 
which coincides with \eqref{mit} after using 
$G(z+1) = \Gamma(z)G(z)$. The condition $\nu > 0$ follows from the 
condition of existence of the Mellin transform of $\Psi_\alpha^\nu(-\kappa t)$, 
that is, $0 < \operatorname{Re} s < 1 + \nu$.
\end{remark} 

Now we give the following definition.

\begin{definition}\label{def:ctrw}
    Let $X_{i},i=1,2,...$ be real, independent random variables, under the assumption that $X_{i}$ is independent of $N_{\alpha,\nu}$, for any $i=1,2,...$; then we define the CTRW $\left\{Y_{\alpha, \nu}(t)  \right\}_{t \geq 0},$ where 
\begin{equation}
Y_{\alpha, \nu}(t):=\sum_{i=1}^{N_{\alpha, \nu}(t)}X_{i}, \qquad t \geq 0.  \label{ya}
\end{equation}%
\end{definition}

In view of what follows, we recall that the $\beta$-stable subordinator $\left\{ \sigma^{(\beta)}(t)\right\}_{t \geq 0}$ is defined, for  $\beta \in (0,1)$ as the a.s. non-decreasing L\'{e}vy process with Laplace transform given in \eqref{subord}, and thus with Laplace exponent $\varphi(\lambda)=\lambda^\beta$.
We will denote by $\mathcal{L}_\beta:=\left\{ \mathcal{L}_\beta (t)\right\}_{t \geq 0}$ the inverse (or first passage time) of the $\beta$-stable subordinator, i.e. the process defined by
\[
\mathcal{L}_\beta(t):=\inf\left\{ s>0:\sigma^{(\beta)}(s)>t\right\}, \qquad t \geq 0.
\]

\begin{theorem}
\label{thm2} Let 
$Y_{\alpha, \nu}$ be the CTRW defined in Def. \ref{def:ctrw}, 
then, under the assumptions that $\nu \in (-1,0)$ and that the r.v.'s $X_i$ are i.i.d. random variables, for $i=1,2,...$, with $\mathbb{E} X=0$ and $\sigma^2_X:=\mathbb{E} X^2<\infty$,
we have the following convergence in the Skorokhod space $\mathbb{D}[0,\infty)$, with the $M_1$-topology, 
\begin{equation}\label{cmt}
\left\{ c^{-(1+\nu)/2}\sigma_X^{-1}Y_{\alpha,\nu}(ct)\right\} _{t\geq 0}\overset{%
M_{1}}{\Longrightarrow }\left\{ A^{1/2}_{\alpha, \nu}B(\mathcal{L}_{\nu+1}(t))\right\} _{t\geq 0},\qquad c\rightarrow +\infty ,
\end{equation}
where $$A_{\alpha, \nu}:=\frac{G(2+\nu-\alpha)\kappa^{\nu+1}\sin(-\pi \nu)}{G(1+\nu)G(1-\alpha)\pi},$$ and the inverse stable subordinator $\mathcal{L}_{\nu+1}$ is independent of the standard Brownian motion $B:=\left\{B(t)\right\}_{t \geq 0}$.
\end{theorem}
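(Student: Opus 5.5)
Following Meerschaert--Scheffler, the plan is to combine a functional CLT for the jumps with a functional stable limit for the renewal times, and then pass through the inverse-process map. The key input is the tail behaviour of the waiting time. By the asymptotic expansion \eqref{asypsi}, for $\nu\in(-1,0)$ we have, as $t\to\infty$,
\[
\mathbb{P}(J^{(\alpha,\nu)}>t)=\Psi_\alpha^\nu(-\kappa t)\sim \frac{G(1+\nu)\Gamma(1+\nu)G(1-\alpha)}{G(2+\nu-\alpha)}\,(\kappa t)^{-(1+\nu)} ,
\]
so $J^{(\alpha,\nu)}$ is regularly varying with index $\beta:=1+\nu\in(0,1)$ and has infinite mean, consistent with \eqref{mit}. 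A more intrinsic check is the Mellin transform \eqref{mel}: its first right pole sits at $s=1+\nu$, coming from $1/G(1+\nu-s)$. Hence $J^{(\alpha,\nu)}$ lies in the strict domain of attraction of a $\beta$-stable law. Concretely, writing $C_\beta$ for the tail constant above and $T_n=\sum_{i\le n}J_i$,
\[
\{n^{-1/\beta}T_{\lfloor nt\rfloor}\}_{t\ge 0}\Rightarrow \{(C_\beta\Gamma(1-\beta))^{1/\beta}\sigma^{(\beta)}(t)\}_{t\ge 0}
\]
in $J_1$, by the standard criterion $n\,\mathbb{P}(J>n^{1/\beta}x)\to C_\beta x^{-\beta}$ together with the Laplace exponent $C_\beta\Gamma(1-\beta)\lambda^\beta$. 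Using $\Gamma(1+\nu)\Gamma(-\nu)=\pi/\sin(-\pi\nu)$, this constant $C_\beta\Gamma(1-\beta)$ equals $1/A_{\alpha,\nu}$ exactly.

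Next I would invoke the inverse-map argument (Meerschaert--Scheffler, Thm.~3.6 / Straka--Henry). Since the $X_i$ are i.i.d. with mean zero and finite variance and are independent of the $J_i$, Donsker's theorem gives joint convergence
\[
\big(n^{-1/2}\sigma_X^{-1}S_{\lfloor nt\rfloor},\,n^{-1/\beta}T_{\lfloor nt\rfloor}\big)\Rightarrow \big(B(t),D(t)\big),
\]
where the limit components are independent and $D=A_{\alpha,\nu}^{-1/\beta}\sigma^{(\beta)}$. Setting $c=n^{1/\beta}$, one has $N(ct)$ equal to the inverse of $T$, and the inverse of $D$ is $\mathcal{L}_D(t)=A_{\alpha,\nu}\mathcal{L}_\beta(t)$, by the self-similarity of $\sigma^{(\beta)}$. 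Then $Y(ct)=S_{N(ct)}$, and the continuous-mapping theorem for composition in $M_1$ applies: the limit $D$ is strictly increasing, so its inverse is continuous, and $B$ is continuous. This yields
\[
c^{-\beta/2}\sigma_X^{-1}Y_{\alpha,\nu}(ct)\Rightarrow B(A_{\alpha,\nu}\mathcal{L}_\beta(t)) \overset{d}{=} A_{\alpha,\nu}^{1/2}B(\mathcal{L}_\beta(t))
\]
as processes, by Brownian scaling combined with independence. Since $\beta/2=(1+\nu)/2$, this is \eqref{cmt}. The passage from integer $n$ to continuous $c$ is handled by monotonicity and regular variation.

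The main obstacle is making the tail asymptotics rigorous, because \eqref{asypsi} is stated only as an expansion. I would first try to get the tail directly from the Mellin transform via a Tauberian / Mellin-pole argument: $F(s)$ has a simple pole at $s=1+\nu$ with explicit residue, and complete monotonicity (Theorem~\ref{second}) makes $\Psi$ monotone, so a Karamata-type Tauberian theorem applies. Alternatively, and perhaps more cleanly, I would use the representation \eqref{LTpsi}, $\Psi_\alpha^\nu(-\kappa t)=\mathbb{E}e^{-\kappa tX_{\alpha,\nu}}$. By \eqref{mom}, the density of $X_{\alpha,\nu}$ near $0$ behaves like $x^{\nu}$ times the constant obtained from the pole of $\mathbb{E}X^s$ at $s=-(1+\nu)$. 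Abelian theorems then give the $t^{-(1+\nu)}$ tail with the same constant, and this is all that is needed.
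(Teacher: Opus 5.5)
Your proposal is correct and follows essentially the same route as the paper: the tail $\Psi_\alpha^\nu(-\kappa t)\sim C_{\alpha,\nu}t^{-(1+\nu)}$ read off from \eqref{asypsi}, the renewal limit $c^{-(1+\nu)}N_{\alpha,\nu}(ct)\Rightarrow A_{\alpha,\nu}\mathcal{L}_{\nu+1}(t)$, then joint convergence with Donsker's theorem, continuous mapping, and Brownian self-similarity. The paper quotes the renewal limit directly from Meerschaert--Sikorskii with $a^{-\beta}=A_{\alpha,\nu}$, which is equivalent to your identity $C_\beta\Gamma(1-\beta)=1/A_{\alpha,\nu}$, whereas you rederive it from the stable limit of $T_n$ and the inverse map. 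Your Tauberian detour is not needed, because the paper's asymptotic-expansion theorem with $m=1$ already gives a remainder $\mathcal{O}(t^{-(\nu+3/2)})=o(t^{-(1+\nu)})$. If you did pursue it, note that a simple Mellin pole plus (complete) monotonicity only controls logarithmic averages of $t^{1+\nu}\Psi_\alpha^\nu(-\kappa t)$ via Karamata, so you would need a Wiener--Ikehara-type theorem that uses the absence of other singularities on $\operatorname{Re}s=1+\nu$.
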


\begin{proof}
We apply the result given in \cite{MeerschaertSikorskii2012}, p.106, which proves that, if the i.i.d. waiting times $J_i$, $i=1,2,...$, are such that $\mathbb{P}(J>t) \sim B t^{-\beta}$, for $B>0$ and $\beta \in (0,1)$, then the following convergence in $\mathbb{D}[0,\infty)$, with the $M_1$-topology, holds for the corresponding rescaled renewal process $\left\{ N_\beta(t)\right\}_{t \geq 0}$:
\begin{equation} \label{m1}
\left\{c^{-\beta}N_\beta(ct)\right\}_{t \geq 0}\overset{M_1}{\Longrightarrow }\left\{a^{-\beta}\mathcal{L}_\beta(t)\right\}_{t \geq 0}, \qquad c \to \infty,
\end{equation}
where $a=(B\Gamma(1-\beta))^{1/\beta}$.
By applying the asymptotic expansion of the solution given in eq.\eqref{asypsi}, we have that
\begin{equation}\mathbb{P}(J^{(\alpha,\nu)}>t)=\Psi_\alpha^\nu(-\kappa t) \sim  
\frac{G(1+\nu)\Gamma(\nu+1)G(1-\alpha)(\kappa t)^{-\nu-1}}{G(2+\nu-\alpha)}=:C_{\alpha,\nu}t^{-\nu-1}, \label{m2}
\end{equation}
as $t \to \infty$. Thus, the convergence in \eqref{m1} holds for $\beta=\nu+1$ (which belongs to $(0,1)$ under the assumption that $\nu \in (-1,0)$) and for $a=(C_{\alpha,\nu}\Gamma(-\nu))^{1/(\nu+1)}$ Moreover, under the same assumption, the constant $C_{\alpha,\nu}$ appearing in \eqref{m2} is well-defined and non-negative. As a consequence, we can apply Theorem 4.2 in \cite{MeeSch} (see also \cite{MeerschaertSikorskii2012}, p.104), so that 
\begin{equation} \label{m3}\left\{c^{-1/2}\sum_{i=1}^{\lfloor ct \rfloor }X_i,c^{-(1+\nu)}N_{\alpha, \nu}(ct)\right\}_{t \geq 0}\overset{M_1}{\Longrightarrow } \left\{B(t),A_{\alpha,\nu}\mathcal{L}_{\nu+1}(t)\right\}_{t \geq 0}, \qquad c \to \infty,\end{equation}
in the product space $\mathbb{D}[0,\infty) \times \mathbb{D}[0, \infty)$. Finally, by the Continuous Mapping Theorem and by the independence of $N_{\alpha,\nu}$ and $X_j$, $j=1,2,...$, we get
\begin{equation}\notag
\left\{ c^{-(1+\nu)/2}\sigma_X^{-1}Y_{\alpha,\nu}(ct)\right\} _{t\geq 0}\overset{%
M_{1}}{\Longrightarrow }\left\{ B(A_{\alpha, \nu}\mathcal{L}_{\nu+1}(t))\right\} _{t\geq 0},\qquad c\rightarrow +\infty ,
\end{equation}
so that the convergence in \eqref{cmt} follows from the self-similarity property (of index $1/2$) of the Brownian motion.
\end{proof}

We note that, as a consequence of the previous result, the limiting process depends on the parameter $\alpha$ only through a constant, while the parameter $\nu$ influences also the diffusing velocity. Moreover, by the well-known results (see, for example, \cite{MAI}), the transition density $f_{\alpha,\nu}(\cdot,\cdot)$ of the limiting process 
defined as \[\mathbb{P}\left(\left.A_{\alpha, \nu}^{1/2}B(\mathcal{L}_{\nu+1}(t)) \in A \right\vert B(\mathcal{L}_{\nu+1}(0))=0\right)=\int_A f_{\alpha,\nu}(x,t)dx, \qquad t \geq 0, A \in \mathcal{B}(\mathbb R),\] satisfies the following problem for the time-fractional heat equation
\begin{equation}\label{caputo}
    \left\{ \begin{array}{l}
\sideset{_{\scriptscriptstyle{\textrm C}}^{}}{_t^{\nu+1}}{\operatorname{\mbox D}} f(x,t)=\frac{A_{\alpha,\nu}}{2}\frac{\partial^2}{\partial x^2} f(x,t)  \qquad x \in \mathbb{R}, t \geq 0, \\
f(x,0)=\delta(x), \\
\lim_{|x| \to \infty}f(x,t)=0, \quad \lim_{|x| \to \infty}\frac{\partial}{\partial x}f(x,t)=0.
    \end{array}\right.
\end{equation}
where $\sideset{_{\scriptscriptstyle{\textrm C}}^{}}{_t^{\beta}}{\operatorname{\mbox D}}$ is the Caputo fractional derivative of order $\beta=\nu+1$, under the usual initial and boundary conditions. Indeed, by recalling the Laplace transform of the inverse stable subordinator
\begin{equation}
    \mathbb Ee^{-\lambda \mathcal{L}_\beta(t)}=E_\beta(-\lambda t^\beta), \qquad \lambda>0,\label{ML}\end{equation}
where $E_\beta(x):=\sum_{n=0}^\infty x^n/\Gamma(\beta n+1)$ is the Mittag-Leffler function, the characteristic function of $\left\{A_{\alpha,\nu}B(\mathcal{L}_{\nu+1}(t))\right\}_{t \geq 0}$ is given by
\begin{equation}\mathbb Ee^{i\xi A_{\alpha,\nu}^{1/2}B(\mathcal{L}_{\nu+1}(t))}=E_{\nu+1}\left(-\xi^2A_{\alpha,\nu}t^{\nu+1}/2 \right), \qquad \xi \in \mathbb R, t \geq 0,\label{ch}\end{equation}
which coincides with the Fourier transform of the solution to \eqref{caputo}.

It is well-known that the process obtained in the limit represents a sub-diffusion model, since $var(A_{\alpha,\nu}^{1/2}B(\mathcal L_{\nu+1}(t))=A_{\alpha,\nu}\mathbb E\mathcal L_{\nu+1}(t)=A_{\alpha,\nu}t^{1+\nu}/\Gamma(\nu+2),$ where $\nu+1<1.$ 

Moreover, by \eqref{ML}, the following self-similarity holds:
\[\left\{\mathcal L_\beta(ct)\right\}\overset{f.d.d.}{=}\left\{c^\beta\mathcal L_\beta(t)\right\},\]
where $\overset{f.d.d.}{=}$ denotes equality of finite-dimensional distributions. As a consequence, we also have that
\[\left\{B(\mathcal L_{\nu+1}(ct))\right\}\overset{f.d.d.}{=}\left\{c^{(\nu+1)/2}B(\mathcal L_{\nu+1}(t))\right\}.\]

Thus the spatial scale growing rate is $t^{(\nu+1)/2}$, slower than $t^{1/2}$ of the standard diffusion, for $\nu \in (-1,0)$.

\subsection{Time-changed Brownian motions associated with the beta-weighted derivative}
In order to obtain a model of diffusion governed by a fractional equation with time-derivative $\mathscr{D}_t^{(\alpha,\nu)}$, we must follow a different approach. By resorting again to the complete monotonicity of $\phi(x) = \Psi_\alpha^\nu(-\kappa x)$, we define the following stochastic process.

\begin{definition}\label{def}
    Let $X_{\alpha, \nu}$ be a random variable with Laplace transform of the density $f_{X_{\alpha, \nu}}(\cdot)$ given in \eqref{LTpsi}, then we define the process $Y_{\alpha,\nu}:=\left\{ Y_{\alpha,\nu}(t) \right\}_{t \geq 0}$ as follows

\begin{equation}
    ( Y_{\alpha,\nu}(t_1),...,  Y_{\alpha,\nu}(t_n)):=( t_1 X_{\alpha,\nu},...,  t_nX_{\alpha,\nu}),
\end{equation}
for any $n \in \mathbb{N}$ and $t_1,...,t_n \geq 0$. Let now $B:=\left\{ B(t) \right\}_{t \geq 0}$ be a standard Brownian motion independent of $Y_{\alpha,\nu}$, then we consider the time-changed Brownian motion $B_{\alpha,\nu}:=\left\{ B_{\alpha,\nu}(t) \right\}_{t \geq 0}$, as
\begin{equation} \label{fdd}( B_{\alpha,\nu}(t_1),...,  B_{\alpha,\nu}(t_n)):=(B(Y_{\alpha,\nu}( t_1) ),...,  B(Y_{\alpha,\nu}(t_n))=(B( t_1 X_{\alpha,\nu}),...,  B(t_nX_{\alpha,\nu})),\end{equation}
for any $0 \leq t_1 <...<t_n,$ $n \geq 1.$

\end{definition}
In view of \eqref{LTpsi}, the joint characteristic function reads
\begin{eqnarray}\label{chf}
 \mathbb{E}e^{i \sum_{k=1}^n\xi_k B_{\alpha,\nu}(t_k)}&=& \mathbb{E}\left[\mathbb{E}\left(e^{i \sum_{k=1}^n\xi_k B(t_k X_{\alpha,\nu})}\vert X_{\alpha,\nu}\right) \right] \\
 &=&\mathbb{E}e^{-\frac{X_{\alpha,\nu}}{2}\sum_{k,j=1}^n\xi_k\xi_j(t_k \wedge t_j)} \notag \\
 &=&\Psi_\alpha^\nu\left(-\frac{1}{2}\sum_{k,j=1}^n\xi_k\xi_j(t_k \wedge t_j) \right), 
\end{eqnarray}
for $\xi_,...,\xi_n \in \mathbb{R}$ and $0 \leq t_1 <...<t_n,$ $n \geq 1.$

It can be checked that the following equality of f.d.d.'s holds, by the scaling property of the Brownian motion:
\begin{equation}\left\{B_{\alpha, \nu}(t) \right\}_{t\geq 0}\overset{f.d.d.}{=} \left\{\sqrt{X_{\alpha,\nu}}B(t)\right\}_{t \geq 0,}\label{fdd}\end{equation}
under the assumption that $X_{\alpha,\nu}$ and $\{B(t)\}_{t \geq 0}$ are independent. 

By conditioning, it is easy to see from \eqref{fdd} that $\mathbb{E}B_{\alpha,\nu}(t)=0$, for any $t \geq 0$; by recalling \eqref{mom} we can evaluate its auto-covariance function, as follows
\[\mathbb{E}\left(B_{\alpha,\nu}(t_k)B_{\alpha,\nu}(t_j)\right)=(t_k \wedge t_j)\mathbb{E}X_{\alpha, \nu}= \frac{G(3+\nu-\alpha)(t_k \wedge t_j)}{G(2+\nu)G(2+\nu-\alpha)}=\frac{\Gamma(2+\nu-\alpha)(t_k \wedge t_j)}{\Gamma(1+\nu)},\] 
for $t_j,t_k \geq 0$, where, in the last step, we have applied the property of the $G$-function (see Appendix \ref{appendix.Barnes}, formula \eqref{barnes.g}). This clearly shows that $var(B_{\alpha,\nu}(t)) \simeq t$, and thus the process is actually a standard diffusion, even though, as we prove below, its one-dimensional density satisfies a time-fractional diffusion equation with operator $\mathscr{D}_t^{(\alpha,\nu)}$. 
It is immediate to check that the one-dimensional density of the process $Y_{\alpha,\nu}$ is given by $f_{Y_{\alpha,\nu}}(y,t)=\frac{1}{t}f_{X_{\alpha,\nu}}(y/t)$, for $y \in \mathbb{R}^+$, $t \geq 0.$ As a consequence, the density of $B_{\alpha,\nu}$ can be obtained as
\begin{equation}\label{td}
    f_{B_{\alpha,\nu}}(z,t)=\int_0^\infty \frac{e^{-z^2/2y}}{\sqrt{2 \pi y}t}f_{X_{\alpha,\nu}}(y/t)dy, \qquad z \in \mathbb{R}, t > 0,
\end{equation}
and it satisfies the following problem:
\begin{equation}\label{pde}
    \left\{ \begin{array}{l}
\mathscr{D}_t^{(\alpha,\nu)} f(x,t)=\frac{1}{2}\frac{\partial^2}{\partial x^2} f(x,t)  \qquad x \in \mathbb{R}, t \geq 0, \\
f(x,0)=\delta(x), \\
\lim_{|x| \to \infty}f(x,t)=0, \quad \lim_{|x| \to \infty}\frac{\partial}{\partial x}f(x,t)=0.
    \end{array}\right.
\end{equation}
Indeed, by taking the Fourier transform of \eqref{pde}, w.r.t. the space argument, we get
\begin{equation}
    \left\{ \begin{array}{l}
\mathscr{D}_t^{(\alpha,\nu)} \tilde{f}(\xi,t)=-\frac{\xi^2}{2}\Tilde{f}(\xi,t),\\
\tilde{f}(\xi,0)=1,
\end{array}  \right.
\end{equation}
for $\xi \in \mathbb{R}$. The latter is satisfied by the characteristic function of $B_{\alpha,\nu}$, i.e. $ \mathbb{E}e^{i \xi B_{\alpha,\nu}(t)}=\Psi_\alpha^\nu\left(-\frac{1}{2}\xi^2t \right)$ (see \eqref{chf} for $n=1$).  Finally, it is easy to see that the latter does not coincide with the characteristic function of  
$\left\{A_{\alpha,\nu}B(\mathcal{L}_{\nu+1}(t))\right\}_{t \geq 0}$, given in \eqref{ch}.

\begin{remark}
The process $\left\{B_{\alpha,\nu}(t) \right\}_{t\geq 0}$ can be considered a generalization of the so-called grey Brownian motion $\left\{B_{\beta}(t) \right\}_{t\geq 0}$ (introduced in \cite{SCH} and studied by many other authors), whose characteristic function reads $ \mathbb{E}e^{i \sum_{k=1}^n\xi_k B_{\beta}(t_k)}=E_\beta\left(-\frac{1}{2}\sum_{k,j=1}^n\xi_k\xi_j(t_k \wedge t_j) \right)$, for $\xi_,...,\xi_n \in \mathbb{R}$ and $0 \leq t_1 <...<t_n,$ $n \geq 1.$

Indeed for the grey Brownian motion, the following equality of f.d.d.'s holds (analogous to \eqref{fdd}):
\begin{equation}\left\{B_{\beta}(t) \right\}_{t\geq 0}\overset{f.d.d.}{=} \left\{\sqrt{Z_{\beta}}B(t)\right\}_{t \geq 0,}\end{equation}
where $Z_{\beta}$ is a r.v. with Wright distribution (see \cite{MUR}, for details) and under the independence assumption.

\end{remark}

\section*{Acknowledgments} 
Luisa Beghin acknowledges financial support under NRRP, Mission 4, Component 2, 
Investment 1.1, Call for tender No. 104 published on 2.2.2022 by the Italian MUR, 
funded by the European Union - NextGenerationUE - Project Title ``Non-Markovian 
Dynamics and Non-local Equations", 202277N5H9 - CUP: D53D23005670006. 

Nikolai Leonenko (NL) was partially supported under the ARC Discovery Grant DP220101680 (Australia), Croatian Scientific Foundation (HRZZ) grant IP-2022-10-8081, Retreat programme (2026) at Isaac Newton Institute for Mathematical Sciences, Cambridge and Taith Research Mobility grant (Wales, Cardiff University). Also, NL would like to thank University of Rome ``La Sapienza" for hospitality and financial support as Visiting Professor (June 2024) where the paper was initiated.

Jayme Vaz would like to thank the support of FAPESP (process 24/17510-6), and Sapienza Universit\`a di Roma for the hospitality as Visiting Professor during the development of this work.

\section*{Competing interests}The authors have no conflicts of interest to declare that are relevant to the content of this article.

\begin{appendices}

\section{Barnes $\boldsymbol{G}$-function}
\label{appendix.Barnes}

In view of what follows we recall the definition and main properties of the Barnes $G$-function: 
\begin{equation}
\label{def.barnes.g}
G(z+1) = (2\pi)^{z/2} \exp\left(-\frac{z+z^2(1+\gamma)}{2}\right) 
\prod_{k=1}^\infty \left[\left(1+\frac{z}{k}\right)^k \exp\left(\frac{z^2}{2k}-z\right)\right] , \qquad z \in \mathbb C,
\end{equation}
where $\gamma$ is the Euler-Mascheroni constant (\cite{BAR}). 
The function $G(z)$ is holomorphic for $z \in \mathbb{C}$, has zeros 
at $z = -n$ ($n=0,1,2,\ldots$) of order $n+1$, 
 and it satisfies   
\begin{align}
\label{barnes.g}
& G(z+1) = \Gamma(z) G(z) \\[1ex]
& G(1) = 1 
\end{align}
The plot of $G(z)$ is in Figure~\ref{fig.1}. 

\begin{figure}[hbt]
\begin{center}
\includegraphics[width=12cm]{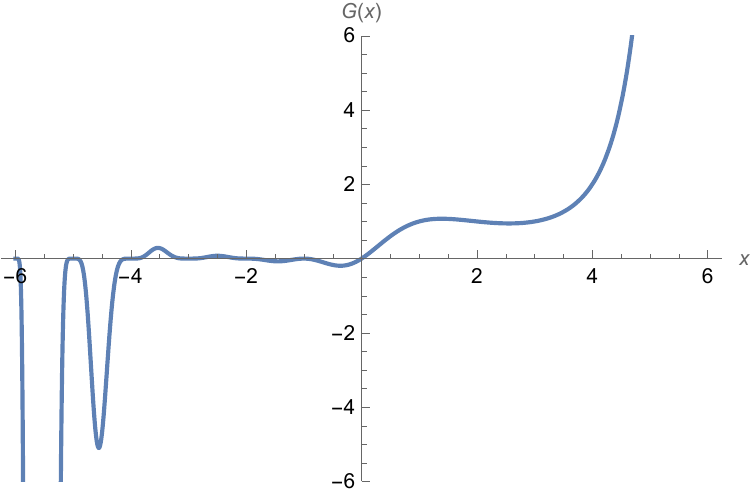}
\caption{Barnes $G$-function.\label{fig.1}}
\end{center}
\end{figure}

From eq.\eqref{def.barnes.g} it follows that 
\begin{equation}
\log{G(z+1)} = \frac{1}{2}(\log{2\pi} - 1)z - \frac{(1+\gamma)}{2} z^2 
+ \sum_{n=3}^\infty (-1)^{n-1}\zeta(n-1) \frac{z^n}{n} ,
\end{equation}
where $\zeta(n) = \sum_{k=1}^\infty k^{-n}$ is the zeta function. 
Writing $G(z) = G(z+1)/\Gamma(z)$, using the power series for $1/\Gamma(z)$ and for 
$G(z+1) = 1 + \log{G(z+1)} + (\log{G(z+1)})^2/2 + \cdots$ we obtain 
\begin{equation}
\label{barnes.g.series}
    G(z) = z + \left(\gamma + \frac{1}{2}(\log{2\pi} -1)\right) z^2 + 
    \left[\frac{1}{8}(\log{2\pi}-1)^2 - \frac{(1+\gamma)}{2} + 
    \frac{\gamma}{2}(\log{2\pi}-1)\right] z^3 + \mathcal{O}(z^4) 
\end{equation}
for $|z| < 1$.

\section{Auxiliary results}

We want to show that 
\begin{equation}
\label{absolute.value.barnes}
|G(z)| = |G(x)| \exp\left(y^2\frac{1+\gamma}{2}\right) 
\sqrt{1+\frac{y^2}{x^2}}\sqrt{\prod_{k=1}^\infty 
\left(1+\frac{y^2}{(x+k)^2}\right)^{k+1} \exp\left(-\frac{y^2}{k}\right)} 
\end{equation}
where $z = x+iy$. 
From eq.\eqref{def.barnes.g} and eq.\eqref{barnes.g}, we write the definition 
of $G(z)$ as 
$$
G(z) = \frac{(2\pi)^{z/2}}{\Gamma(z)} 
\exp\left(-\frac{z+z^2(1+\gamma)}{2}\right) \prod_{k=1}^\infty 
\left[\left(1+\frac{z}{k}\right)^k \exp\left(\frac{z^2}{2k}-z\right)\right] .
$$
Evaluating $|G(z)|^2 = G(z)\overline{G(z)} = G(z)G(\bar{z})$, we obtain 
\begin{equation}
\begin{aligned}
\label{eq.barnesg.aux}
|G(z)|^2 = & \frac{1}{|\Gamma(z)|^2} \left[ 
(2\pi)^{x/2}\exp\left(-\frac{x+x^2(1+\gamma)}{2}\right) 
\prod_{k=1}^\infty \left(1+\frac{x}{k}\right)^k 
\exp\left(\frac{x^2}{2k}-x\right)\right]^2 \\[1ex]
& \cdot \operatorname{e}^{y^2(1+\gamma)} \prod_{k=1}^\infty \left(1+\frac{y^2}{(x+k)^2}\right)^k 
\exp\left(-\frac{y^2}{k}\right) .
\end{aligned}
\end{equation}
Recalling Weierstrass definition of the gamma function, that is, 
$$
\frac{1}{\Gamma(z)} = z \operatorname{e}^{\gamma z} \prod_{k=1}^\infty \left(1+\frac{z}{k}\right) \operatorname{e}^{-z/n} , 
$$
which gives 
\begin{equation}
\label{abs.gamma.complex}
\frac{1}{|\Gamma(z)|^2} = \frac{1}{|\Gamma(x)|^2} 
\prod_{k=0}^\infty \left(1 + \frac{y^2}{(x+k)^2}\right) ,
\end{equation}
then we get from eq.\eqref{eq.barnesg.aux} that 
$$
|G(z)|^2 = |G(x)|^2 \left(1 + \frac{y^2}{x^2}\right)
\prod_{k=1}^\infty \left(1 + \frac{y^2}{(x+k)^2}\right) 
\operatorname{e}^{y^2(1+\gamma)} \prod_{k=1}^\infty \left(1+\frac{y^2}{(x+k)^2}\right)^k 
\exp\left(-\frac{y^2}{k}\right) ,
$$
which gives eq.\eqref{absolute.value.barnes}. 

\section{The integrals along $C_\uparrow/C_\downarrow$}

Let us write $\operatorname{Re}s = \xi = $ and $\operatorname{Im}s = \eta$. 
Then we can write eq.\eqref{exp.log.integrand} as 
\begin{equation}
\label{exp.log.integrand.2}
\begin{aligned}
& \log\left|(\kappa x)^{-s} \frac{\Gamma(s) G(2-s-\alpha)}{  G(1-s)}\right|  =   
\alpha\xi\log{\sqrt{\xi^2+\eta^2}}  -\xi \log|\kappa x| + \eta \operatorname{arg}(\kappa x) \\[1ex] 
& \qquad -\alpha(\theta\eta + \xi) -(1-\alpha)\pi |\eta|   
+ \frac{(\alpha+\nu)(\alpha+\nu-2)}{2}\log{\sqrt{\xi^2+\eta^2}} + \mathcal{O}(1) ,
\end{aligned}
\end{equation}
with $\xi = R\cos\theta$ and $\eta = R\sin\theta$. For $s \in C_\uparrow$ [resp. 
$s \in C_\downarrow$] we have $s = \xi + i \tau$ [resp. $s = \xi - i\tau$] 
with $0 < \xi < \nu + m + 1/2$. For $\tau \to \infty$ the leading term in 
eq.\eqref{exp.log.integrand.2} for $s \in C_\uparrow$ is 
\begin{equation}
\label{tau.1}
\tau(\operatorname{arg}(\kappa x) - \alpha\theta - (1-\alpha)\pi) 
\end{equation}
with $\theta \to \pi/2$ for $\tau \to \infty$, and for $s \in C_\downarrow$ the 
leading term is 
\begin{equation}
\label{tau.2}
-\tau(\operatorname{arg}(\kappa x) - \alpha\theta + (1-\alpha)\pi) 
\end{equation}
with $\theta \to -\pi/2$ for $\tau \to \infty$. The integral along $C_\uparrow$ 
will vanishes for $\tau \to \infty$ if 
$$
(\operatorname{arg}(\kappa x) - \alpha\frac{\pi}{2} - (1-\alpha)\pi) < 0 \; \quad \Rightarrow \quad
\; \operatorname{arg}(\kappa x) < \left(1-\frac{\alpha}{2}\right) \pi ,
$$
and the integral along $C_\downarrow$ will vanishes for $\tau \to \infty$ if 
$$
-(\operatorname{arg}(\kappa x) + \alpha\frac{\pi}{2} + (1-\alpha)\pi) < 0 
\; \quad \Rightarrow \quad \; \operatorname{arg}(\kappa x) > - \left(1-\frac{\alpha}{2}\right) \pi . 
$$
Thus the integrals along $C_\uparrow$ and $C_\downarrow$ will vanish for 
$\tau \to \infty$ if 
$$
|\operatorname{arg}\kappa x | < \left(1-\frac{\alpha}{2}\right) \pi . 
$$

\section{The integral along $C^\prime$}

Let us denote the integral along $C^\prime$ by $J_{\nu,m}(\alpha)$, that is, 
\begin{equation}
\begin{aligned}
& J_{\nu,m}(\alpha) =   -\frac{1}{2\pi i}\frac{G(1+\nu)}{G(2+\nu-\alpha)}
\int_{(\nu+m+1/2)-i\infty}^{(\nu+m+1/2)+i\infty}(\kappa x)^{-s}  \Gamma(s) \frac{G(2+\nu-\alpha-s)}{G(1+\nu-s)} \, ds \\[1ex]
& \quad = - \frac{1}{2\pi i}\frac{G(1+\nu)}{G(2+\nu-\alpha)}
\int_{(\nu+m+1/2)-i\infty}^{(\nu+m+1/2)+i\infty}(\kappa x)^{-s}  \Gamma(s)\Gamma(1+\nu-s) \frac{G(2+\nu-\alpha-s)}{G(2+\nu-s)} \, ds 
\end{aligned}
\end{equation}
Taking $s = (\nu + m + 1/2) + i \eta$ and using the triangle inequality for integrals, we have 
\begin{equation}
\begin{aligned}
| J_{\nu,m}(\alpha)|\leq & \frac{1}{2\pi}\left|\frac{G(1+\nu)}{G(2+\nu-\alpha)}\right| 
|\kappa x|^{-(\nu+m+1/2)} \\[1ex]
& \cdot \int_{-\infty}^\infty 
\left|\Gamma\left({\textstyle \nu+m+\frac{1}{2}+i\eta}\right)\right| 
\left|\Gamma\left({\textstyle  \frac{1}{2}-m-i\eta}\right)\right| 
\left|\frac{G\left(\frac{3}{2}-m-\alpha-i\eta\right)}{G\left(\frac{3}{2}-m-i\eta\right)}\right| 
\, ds .
\end{aligned}
\end{equation}
Using eq.\eqref{abs.gamma.complex} we can write 
\begin{equation}
\label{eq.gamma.aux.1}
\left|\Gamma\left({\textstyle \nu+m+\frac{1}{2}+i\eta}\right)\right| 
\left|\Gamma\left({\textstyle  \frac{1}{2}-m-i\eta}\right)\right|  = 
\frac{\left|\Gamma\left({\textstyle \nu+m+\frac{1}{2}}\right)\right| 
\left|\Gamma\left({\textstyle  \frac{1}{2}-m}\right)\right| }{ 
\sqrt{{\displaystyle \prod_{k=0}^\infty} \left(1 + \frac{\eta^2}{(k+\nu+\frac{1}{2}+m)^2}\right)  
 \left(1 + \frac{\eta^2}{(k+\frac{1}{2}-m)^2}\right)} } ,
\end{equation}
and using eq.\eqref{absolute.value.barnes}, 
\begin{equation}
\left|\frac{G\left(\frac{3}{2}-m-\alpha-i\eta\right)}{G\left(\frac{3}{2}-m-i\eta\right)}\right| 
= \left|\frac{G\left(\frac{3}{2}-m-\alpha\right)}{G\left(\frac{3}{2}-m\right)}\right| 
\frac{\sqrt{1+\frac{\eta^2}{\left(\frac{3}{2}-m-\alpha\right)^2}}}{\sqrt{1+\frac{\eta^2}{\left(\frac{3}{2}-m\right)^2}}}
\frac{\sqrt{{\displaystyle \prod_{k=1}^\infty} \left( 1+\frac{\eta^2}{\left(\frac{3}{2}-m-\alpha+k\right)^2}\right)^{k+1}}}{\sqrt{{\displaystyle \prod_{k=1}^\infty} \left( 1+\frac{\eta^2}{\left(\frac{3}{2}-m+k\right)^2}\right)^{k+1}}}
\end{equation}
Since $\alpha \leq 1$, we have 
$$
\frac{3}{2}-m -\alpha \geq \frac{1}{2}-m , 
$$
and  we can write 
\begin{equation}
\sqrt{1+\frac{\eta^2}{\left(\frac{3}{2}-m-\alpha\right)^2}} \leq 
\sqrt{1+\frac{\eta^2}{\left(\frac{1}{2}-m\right)^2}}
\end{equation}
and
\begin{equation}
\begin{aligned}
{\displaystyle \prod_{k=1}^\infty} \left( 1+\frac{\eta^2}{\left(\frac{3}{2}-m-\alpha+k\right)^2}\right)^{k+1} & \leq 
{\displaystyle \prod_{k=1}^\infty} \left( 1+\frac{\eta^2}{\left(\frac{1}{2}-m+k\right)^2}\right)^{k+1} \\[1ex]
& = \left(1+\frac{\eta^2}{\left(\frac{3}{2}-m\right)^2}\right)^2 
{\displaystyle \prod_{k=1}^\infty} \left( 1+\frac{\eta^2}{\left(\frac{3}{2}-m+k\right)^2}\right)^{k+2} .
\end{aligned}
\end{equation}
Thus 
\begin{equation}
\sqrt{\frac{{\displaystyle \prod_{k=1}^\infty} \left( 1+\frac{\eta^2}{\left(\frac{3}{2}-m-\alpha+k\right)^2}\right)^{k+1}}{{\displaystyle \prod_{k=1}^\infty} \left( 1+\frac{\eta^2}{\left(\frac{3}{2}-m+k\right)^2}\right)^{k+1}} } \leq 
\left(1+\frac{\eta^2}{\left(\frac{3}{2}-m\right)^2}\right) \sqrt{  
{\displaystyle \prod_{k=1}^\infty} \left( 1+\frac{\eta^2}{\left(\frac{3}{2}-m+k\right)^2}\right)} ,
\end{equation}
and 
\begin{equation}
\label{eq.barnes.g.aux.1}
\begin{aligned}
\left|\frac{G\left(\frac{3}{2}-m-\alpha-i\eta\right)}{G\left(\frac{3}{2}-m-i\eta\right)}\right| 
& \leq   \left|\frac{G\left(\frac{3}{2}-m-\alpha\right)}{G\left(\frac{3}{2}-m\right)}\right| 
\sqrt{1+\frac{\eta^2}{\left(\frac{1}{2}-m\right)^2}} 
\sqrt{  
{\displaystyle \prod_{k=0}^\infty} \left( 1+\frac{\eta^2}{\left(\frac{3}{2}-m+k\right)^2}\right)} \\[1ex]
&  = \left|\frac{G\left(\frac{3}{2}-m-\alpha\right)}{G\left(\frac{3}{2}-m\right)}\right|
\sqrt{  
{\displaystyle \prod_{k=0}^\infty} \left( 1+\frac{\eta^2}{\left(\frac{1}{2}-m+k\right)^2}\right)}
\end{aligned}
\end{equation}
Using eq.\eqref{eq.gamma.aux.1} and eq.\eqref{eq.barnes.g.aux.1}, we have 
\begin{equation}
|J_{\nu,m}(\alpha)| \leq  \Theta(\nu,m,\alpha)  |\kappa x|^{-(\nu+m+1/2)} 
\int_{-\infty}^\infty \frac{d\eta}{\sqrt{{ \prod_{k=0}^\infty} 
\left(1 + \frac{\eta^2}{\left( k + \nu + \frac{1}{2}+m\right)^2}\right)}} , 
\end{equation}
where 
$$
 \Theta(\nu,m,\alpha) = \frac{1}{2\pi}\left| 
\frac{G(1+\nu) \Gamma\left(\nu+ m + \frac{1}{2}\right) 
\Gamma\left(\frac{1}{2}-m\right) G\left(\frac{3}{2}-m-\alpha\right)}{G(2+\nu-\alpha) G\left(\frac{3}{2}-m\right)} \right| . 
$$
We can simplify the above integral by taking $N = \lceil \nu + m \rceil$. Then 
$$
{ \prod_{k=0}^\infty} 
\left(1 + \frac{\eta^2}{\left( k + \nu + \frac{1}{2}+m\right)^2}\right) \geq 
{ \prod_{k=0}^\infty} 
\left(1 + \frac{\eta^2}{\left( k + N +\frac{1}{2}\right)^2}\right) = 
\frac{{ \prod_{j=0}^\infty} 
\left(1 + \frac{\eta^2}{\left( j + \frac{1}{2}\right)^2}\right)}{{ \prod_{j=0}^{N-1}} 
\left(1 + \frac{\eta^2}{\left( j + \frac{1}{2}\right)^2}\right)}
$$
and we can write 
$$
\int_{-\infty}^\infty \frac{d\eta}{\sqrt{{ \prod_{k=0}^\infty} 
\left(1 + \frac{\eta^2}{\left( k + \nu + \frac{1}{2}+m\right)^2}\right)}} \leq 
\int_{-\infty}^\infty \sqrt{\frac{P_{2N-2}(\eta)}{\cosh{\pi \eta}}} \, d\eta = \mathcal{I}(N), 
$$
where $P_{2N-2}(\eta)$ is the polynomial of degree $2N-2$ given by 
$$
P_{2N-2}(\eta) = { \prod_{j=0}^{N-1}} 
\left(1 + \frac{\eta^2}{\left( j + \frac{1}{2}\right)^2}\right) 
$$
and we have used 
$$
\cosh{\pi \eta} = { \prod_{j=0}^\infty} 
\left(1 + \frac{\eta^2}{\left( j + \frac{1}{2} \right)^2}\right) . 
$$
The integral $\mathcal{I}(N)$ is clearly convergent and finally we have
$$
|J_{\nu,m}(\alpha)| \leq  \mathcal{I}(N) \Theta(\nu,m,\alpha)  |\kappa x|^{-(\nu+m+1/2)} . 
$$

\section{Evaluation of residues}
\label{residues}
\subsection{Evaluation of eq.\eqref{residue.1}}
Let us evaluate 
\begin{equation}
\rho_k = \underset{t=0}{\operatorname{Res}} \left[\frac{(\kappa x)^{-t} \Gamma(\nu+k+t)G(2-\alpha-k-t)}{  G(1-k-t)} \right] 
\end{equation} 
for $k = 1$ and $k=2$. For this we will use the following expressions 
\begin{align}
\label{barnesg.1}
& G(z) = z + \left(\gamma + \frac{1}{2}(\log{2\pi}-1)\right) z^2 + 
\mathcal{O}(z^3) , \\[1ex]
\label{barnesg.2}
& G(z-1) = -z^2 -\frac{1}{2}(4\gamma + \log{2\pi}-3)z^3 + \mathcal{O}(z^4), \\[1ex]
\label{barnesg.3}
& G^\prime(z) = G(z)\left[\frac{1}{2}\left(1+\log{2\pi}\right) - z + (z-1)\psi(z)\right] ,
\end{align}
where eq.\eqref{barnesg.1} is eq.\eqref{barnes.g.series} reproduced here for convenience, eq.\eqref{barnesg.2} is obtained from eq.\eqref{barnesg.1} using 
$G(z-1) = (z-1) G(z)/\Gamma(z)$ and the series for $1/\Gamma(z)$, that is 
\begin{equation}
    \frac{1}{\Gamma(z)} = z + \gamma z^2 +\left(\frac{\gamma^2}{2} - \frac{\pi^2}{12}\right) z^3 + \mathcal{O}(z^4) , 
\end{equation}
and eq.\eqref{barnesg.3} is obtained from (2.17) in \cite{Choi}. 

For $k=1$ we have 
\begin{equation}
\begin{split}
& \frac{(\kappa x)^{-t} \Gamma(\nu+1+t)G(1-\alpha-t)}{G(-t)} \\[1ex]
& = 
\frac{(1-t\log{\kappa x} + \ldots)(\Gamma(\nu+1)+\Gamma^\prime(\nu+1)t+\ldots)(G(1-\alpha)-G^\prime(1-\alpha)t+\ldots)}{-t\left[1-(\gamma+(1/2)(\log{2\pi}-1) t + \ldots\right]}\\[1ex]
& = -\frac{\Gamma(\nu+1)\Gamma(1-\alpha)}{t} + \mathcal{O}(1) 
\end{split}
\end{equation}
and so 
\begin{equation}
    \rho_1 = -\Gamma(\nu+1)G(1-\alpha) . 
\end{equation}

For $k = 2$ we have 
\begin{equation}
\begin{split}
& \frac{(\kappa x)^{-t} \Gamma(\nu+2+t)G(-\alpha-t)}{G(-1-t)} \\[1ex]
& = 
\frac{(1-t\log{\kappa x} + \cdots)(\Gamma(\nu+2)+\Gamma^\prime(\nu+2)t+\ldots)(G(-\alpha)-G^\prime(-\alpha)t+\cdots)}{-t^2\left[1-(\gamma+(1/2)(4\gamma-3+\log{2\pi}) t + \cdots\right]}\\[1ex]
& = -\frac{\Gamma(\nu+2)\Gamma(-\alpha)}{t^2} - \frac{1}{t} \big[ -\Gamma(\nu+2)G(-\alpha) \log{\kappa x} + \Gamma^\prime(\nu+2) G(-\alpha) \\[1ex]
& \quad - G^\prime(-\alpha)\Gamma(\nu+2) + 
\frac{1}{2}(4\gamma-3+\log{2\pi}) \Gamma(\nu+2)G(-\alpha)\big] + \mathcal{O}(1) 
\end{split}
\end{equation}
and using eq.\eqref{barnesg.3} for $G^\prime(-\alpha)$ and expressing $\Gamma^\prime(\cdot)$ in terms
of the digamma function, we obtain 
\begin{equation}
\begin{split}
& \frac{(\kappa x)^{-t} \Gamma(\nu+2+t)G(-\alpha-t)}{G(-1-t)} \\[1ex]
& = -\frac{\Gamma(\nu+2)\Gamma(-\alpha)}{t^2} \\[1ex]
& \quad + \frac{\Gamma(\nu+2)G(-\alpha)}{t} 
\big[\log{\kappa x} -\psi(\nu+2) +\alpha - (\alpha+1)\psi(-\alpha) - 
2\gamma+2\big]+ \mathcal{O}(1) 
\end{split}
\end{equation}
and so 
\begin{equation}
\rho_2 = \Gamma(\nu+2)G(-\alpha)
\big[\log{\kappa x} -\psi(\nu+2) +\alpha - (\alpha+1)\psi(-\alpha) - 
2\gamma+2\big] .
\end{equation} 

\subsection{Evaluation of eq.\eqref{residue.case.i}, eq.\eqref{residue.case.ii} and eq.\eqref{residues.case.iii}}

Let us write eq.\eqref{residue.case.i} as 
\begin{equation}
\rho_1^{\text{(i)}} = (\kappa z^{-1})^{-(1+\mu)}
\underset{t=0}{\operatorname{Res}} \, (\kappa z)^{-t}\Gamma(-t)
\Gamma(1+\mu+t)\frac{G(1+\nu-\mu-\alpha-t)}{G(\nu-\mu-t)} . 
\end{equation}
Recalling that 
\begin{equation}
\label{gamma.series}
    \Gamma(z) = \frac{1}{z} - \gamma + \frac{1}{2}(\gamma^2 + \zeta(2))z + 
    \mathcal{O}(z^2) 
\end{equation}
we have 
\begin{equation}
\begin{split}
& (\kappa z^{-1})^{-t}\Gamma(-t)
\Gamma(1+\mu+t)\frac{G(1+\nu-\mu-\alpha-t)}{G(\nu-\mu-t)}  \\[1ex]
& = (1-t\log{\kappa z^{-1}}+\ldots)(-t^{-1}-\gamma+\cdots)(\Gamma(1+\mu) + \Gamma^\prime(1+\mu)t+\cdots) \\[1ex]
& \qquad \cdot \frac{(G(1+\nu-\mu-\alpha) - G^\prime(1+\nu-\mu-\alpha)t+\cdots)}{G(\nu-\mu)(1-t G^\prime(\nu-\mu)/G(\nu-\mu)+\cdots)} \\[1ex]
& =-\frac{1}{t} \Gamma(1+\mu)\frac{G(1+\nu-\mu-\alpha)}{G(\nu-\mu)} + 
\mathcal{O}(1) ,
\end{split}
\end{equation}
and so 
\begin{equation}
    \rho_1^{\text{(i)}} = -(\kappa z)^{-(1+\mu)} \Gamma(1+\mu)\frac{G(1+\nu-\mu-\alpha)}{G(\nu-\mu)} .
\end{equation}

For eq.\eqref{residue.case.ii}, let us rewrite it as 
\begin{equation}
    \rho_1^{\text{(ii)}} = (\kappa z^{-1})^{-(1+\nu)} 
    \underset{t=0}{\operatorname{Res}} \, (\kappa z^{-1})^{-t} 
    \Gamma(\mu-\nu-t)\Gamma(1+\nu+t) \frac{G(1-\alpha -t)}{G(-t)} . 
\end{equation}
Using eq.\eqref{barnesg.1} we have 
\begin{equation}
\begin{split}
& (\kappa z^{-1})^{-t} 
    \Gamma(\mu-\nu-t)\Gamma(1+\nu+t) \frac{G(1-\alpha -t)}{G(-t)}\\[1ex]
& = (1-t\log\kappa z^{-1} + \cdots) (\Gamma(\mu-\nu)-\Gamma^\prime(\mu-\nu) t + \cdots) 
(\Gamma(1+\nu) + \Gamma^\prime(1+\nu) t + \cdots) \\[1ex]
& \quad \cdot \frac{G(1-\alpha) - G^\prime(1-\alpha)t + \cdots}{(-t) (1-
(\gamma+(1/2)(\log{2\pi}-1) t + \cdots)} \\[1ex]
& -\frac{1}{t}\Gamma(\mu-\nu)\Gamma(1+\nu) G(1-\alpha) 
\end{split}
\end{equation}
which gives 
\begin{eqnarray}
\rho_1^{\text{(ii)}} = -(\kappa z^{-1})^{-(1+\nu)} \Gamma(\mu-\nu)\Gamma(1+\nu) G(1-\alpha) .
\end{eqnarray}

Let us rewrite eq.\eqref{residues.case.iii} as 
\begin{equation}
\rho_1^{\text{(iii)}} = (\kappa z^{-1})^{-(1+\nu)} 
\underset{t=0}{\operatorname{Res}} \, (\kappa z^{-1})^{-t} \Gamma(-t) 
\Gamma(1+\nu+t) \frac{G(1-\alpha-t)}{G(-t)} . 
\end{equation}
Using eq.\eqref{barnesg.1} and eq.\eqref{gamma.series} we have
\begin{equation}
\begin{split}
& (\kappa z^{-1})^{-t} \Gamma(-t) 
\Gamma(1+\nu+t) \frac{G(1-\alpha-t)}{G(-t)} \\[1ex]
& = (1-t\log\kappa z^{-1} + \cdots) (-t^{-1} - \gamma + \cdots) 
(\Gamma(1+\nu) + \Gamma^\prime(1+\nu)t + \cdots) \\[1ex]
& \quad \cdot \frac{G(1-\alpha) - G^\prime(1-\alpha) t + \cdots)}{(-t)(1-(\gamma+(1/2)(\log{2\pi-1))t + \cdots) }} \\[1ex]
& = \frac{\Gamma(1+\nu)G(1-\alpha)}{t^2} + 
\frac{1}{t}\bigg[ \Gamma(1+\nu)G(1-\alpha)\log z/k + \gamma \Gamma(1+\nu) G(1-\alpha) \\[1ex] 
& \quad  + \Gamma^\prime(1+\nu) G(1-\alpha) 
+ \Gamma(1+\nu) G^\prime(1-\alpha) -\left(\gamma+\frac{1}{2}(\log{2\pi}-1)\right) \Gamma(1+\nu)G(1-\alpha) \bigg] .
\end{split}
\end{equation}
Finally using the definition of the digamma function and eq.\eqref{barnesg.3} we obtain 
\begin{equation}
\rho_1^{\text{(iii)}} = 
(\kappa z^{-1})^{-(1+\nu)} G(1-\alpha) \Gamma(1+\nu) 
[2\gamma + \psi(1+\nu) - \alpha + \alpha \psi(1-\alpha) + \log{z/k}] .
\end{equation}


\end{appendices}


\begin{thebibliography}{99}
\bibitem{BAR} E.W. Barnes, Genesis of the double gamma function, \textit{Proc. London Math. Soc.} \textbf{31} 358-381 (1900).



\bibitem{BLV} L. Beghin, N. Leonenko and J. Vaz, Relaxation equations with stretched non-local operators: renewals and time-changed processes, \textit{Journal of Theoretical Probability}, \textbf{39}, 22 (2026).

\bibitem{Simon} L. Boudabsa and T. Simon, Some properties of the Kilbas-Saigo function, In: Special Functions with Applications to Mathematical Physics II, \textit{Mathematics}, \textbf{9}, 1-24 (2021). 


\bibitem{Choi} J. Choi and H.M. Srivastava, Certain classes of
series involving the zeta function. Journal of 
Mathematical Analysis and Applications \textbf{231}, 91-117 
(1999). 


\bibitem{Diethelm} K. Diethelm, \textit{The Analysis of Fractional Differential Equations},
Springer-Verlag (2010). 

\bibitem{Ferreira} C. Ferreira and J. L. L\'opez, An asymptotic expansion of the double gamma
function, \textit{Journal of Approximation Theory}, \textbf{111}, 298-314 (2001).



\bibitem{Gautschi} W. Gautschi, Some elementary inequalities relating to the gamma and incomplete gamma function, \textit{Journal of Mathematics and Physics,} \textbf{38}, 
77-81 (1959).

\bibitem{HSF} J. Letemplier and T. Simon, On the law of homogeneous stable functionals, \textit{ESAIM: Probability and Statistics,} \textbf{23}, 82-111 (2019).

\bibitem{LUC} Y. Luchko and J. J. Trujillo, Caputo-type modification of the Erd\'elyi-Kober fractional derivative, \textit{Fractional Calculus and Applied Analysis}, \textbf{10}, 249-267 (2007).


\bibitem{Kilbas} Kilbas, A.A., Srivastava, H.M., Trujillo, J. J., 
\textit{Theory and Applications of Fractional Differential Equations}, 
Elsevier (2006).

\bibitem{Kochubei2011} A.N. Kochubei, General fractional calculus, evolution equations, and renewal processes. 
Integral Equations and Operator Theory \textbf{71}, 583-600 (2011). 


\bibitem{MAI} F. Mainardi, Applications of integral transforms in fractional diffusion processes, \textit{Integral Transforms and Special Functions},
\textbf{15}, 6, 477-484 (2004).



\bibitem{MeeSch} M.M. Meerschaert and H.P. Scheffler, \textit{Limit Theorems for Sums of Independent Random Vectors},  Wiley, New York, 2001.



\bibitem{MeerschaertSikorskii2012} M.M. Meerschaert and A. Sikorskii, 
\textit{Stochastic Models for Fractional Calculus}, De Gruyter (2012).



\bibitem{MetzlerKlafter2000} R. Metzler and J. Klafter, The random walk's guide 
to anomalous diffusion: a fractional dynamics approach. \textit{Physics Reports}, \textbf{339}, 
1-77 (2000). 


\bibitem{MontrollWeiss1965} E.W. Montroll and G.H. Weiss, Random walks on lattices II, 
\textit{Journal of Mathematical Physics} \textbf{6}, 167-181 (1965).


\bibitem{MUR} A. Mura and G. Pagnini, Characterizations and simulations of a class of stochastic processes to model
anomalous diffusion. \textit{J. Phys. A Math. Theor}, 285003, 41, 22 (2008).


\bibitem{Podlubny} I. Podlubny, \textit{Fractional Differential Equations}, 
Academic Press (1999). 


\bibitem{Sandow} S. Sandow, K. Namtomah and B. Seidu, On some analytical properties of the Barnes $G$ function, 
\textit{Asia Pacific Journal of Mathematics}, \textbf{8}, 7 (2021). 

\bibitem{SCH} W.R. Schneider, \textit{Grey noise}. In: Albeverio, S., Fenstad, J.E., Holden, H., Lindstrom, T. (eds.) Ideas and Methods in Mathematical Analysis, Stochastics and Applications, vol. I, pp. 261-282. Cambridge
University Press, Cambridge (1990).



 
\bibitem{Toaldo2015} B. Toaldo, Convolution-type derivatives, 
hitting-times of subordinators and time-changed $C_0$-semigroups, \textit{Potential Analysis}, \textbf{42}, 115-140 (2015). 

\bibitem{Vaz-Fox-Barnes} J. Vaz, A generalization of 
the Fox H-function. To appear in Theory of Probability and Mathematical Statistics. 
\texttt{https://doi.org/10.48550/arXiv.2510.15920} 


\bibitem{VazCapelas} J. Vaz and E. Capelas de Oliveira, On fractional differential equations, 
dimensional analysis, and the double gamma function, \textit{Nonlinear Dynamics}, \textbf{113}, 
34305-34320 (2025). 



\end{thebibliography}
\end{document}